\renewcommand{\quad}{$~~~\;\;\;$}
\newtheorem{theorem}{Theorem}
\newtheorem{proposition}{Proposition}
\newtheorem{example}{Example}
\newcommand{\pyrwidth}{\text{\rm PWidth}}
\newcommand{\pyrdir}{\text{\rm PdirW}}
\newcommand{\faces}{\text{\rm faces}}
\newcommand{\facets}{\text{\rm facets}}
\newcommand{\vertices}{\text{\rm vertices}}
\newcommand{\conv}{\text{\rm conv}}
\newcommand{\affine}{\text{\rm affine}}
\newcommand{\transp}{{^{\rm T}}}
\newcommand{\cone}{{\rm cone\,}}
\newcommand{\vertiii}[1]{{\left\vert\kern-0.25ex\left\vert\kern-0.25ex\left\vert #1 
    \right\vert\kern-0.25ex\right\vert\kern-0.25ex\right\vert}}
\newcommand{\diam}{\mathsf{diam}}
\renewcommand{\R}{\mathbb{R}}
\newcommand{\matr}[1]{\begin{bmatrix} #1 \end{bmatrix}}    
\def\transp{^{\rm T}}
\newcommand{\ip}[2]{\left\langle #1 , #2 \right\rangle}    
\providecommand{\newoperator}[3]{%
  \newcommand*{#1}{\mathop{#2}#3}}
\newoperator{\argmax}{\mathrm{argmax}}{}
\newoperator{\argmin}{\mathrm{argmin}}{}
\newoperator{\Argmin}{\mathrm{Argmin}}{}
\newoperator{\Argmax}{\mathrm{Argmax}}{}
\newcommand{\1}{\mathbf{1}}
\newcommand{\dmin}{\displaystyle\min}
\newcommand{\dmax}{\displaystyle\max}
\newcommand{\dist}{\mathsf{dist}}
\newcommand{\blue}[1]{#1}
\author{Javier Pe\~na\thanks{Tepper School of Business,
Carnegie Mellon University, USA, {\tt jfp@andrew.cmu.edu}}
\and Daniel Rodr\'iguez\thanks{Department of Mathematical Sciences, Carnegie
Mellon University, USA, {\tt drod@cmu.edu }}
}
\title{Polytope conditioning and linear convergence of the Frank-Wolfe algorithm}
\begin{document}
\maketitle
\begin{abstract}
It is known that the gradient descent algorithm converges linearly when applied to a strongly convex function with Lipschitz gradient. In this case the algorithm's rate of convergence is determined by the condition number of the function.  In a similar vein, it has been shown that a variant of the Frank-Wolfe algorithm with away steps converges linearly when applied to a strongly convex function with Lipschitz gradient over a polytope.  In a nice extension of the unconstrained case, the algorithm's rate of convergence is determined by the product of the condition number of the function and a certain condition number of the polytope.  

We shed new light into the latter type of polytope conditioning.  In particular, we show that previous and seemingly different approaches to define a suitable condition measure for the polytope are essentially equivalent to each other.  Perhaps more interesting, they can all be unified via a parameter of the polytope that formalizes a key premise linked to the algorithm's linear convergence.  We also give new insight into the linear convergence property. For a convex quadratic objective, we show that the  rate of convergence is determined by \blue{a} condition number of a suitably scaled polytope.
\end{abstract}

\newpage

\section{Introduction}

It is a standard result in convex optimization that the gradient descent algorithm 
converges linearly to the minimizer of a strongly convex function with Lipschitz gradient.  For a related discussion, e.g.,~\cite[Chapter 2]{Nest04} or~\cite[Chapter 1]{Bert99}.  Furthermore, in this case the rate of convergence is determined by the {\em condition number} of the objective function, that is, the ratio between the Lipschitz parameter of the gradient and the strong convexity parameter of the function.

In analogous fashion, the Frank-Wolfe algorithm~\cite{FranW56,Jagg13}, also known as conditional gradient algorithm, for the problem
$
\dmin_{\blue{u}\in C} f(\blue{u})
$
converges linearly to the minimizer of $f$ on a compact convex set $C$ provided $f$ is strongly convex \blue{with Lipschitz gradient} and the optimal solution lies in the relative interior of $C$.  For a related discussion \blue{see, e.g.,~\cite{BeckT04,EpelF00,GuelM86,LacoJ15}} and the references therein. The assumption that the optimal solution belongs to relative interior of $C$ is critical for the linear convergence of the algorithm.  Indeed, the rate of convergence depends on how far the optimal solution is from the relative boundary of $C$. In particular, this rate deteriorates when the optimal solution is near the relative boundary of $C$, and linear convergence breaks down altogether when the optimal solution is on the relative boundary of $C$.  

\blue{A variant of the Frank-Wolfe algorithm that includes {\em away steps} was proposed by Wolfe~\cite{Wolf70}.  Several articles have shown linear convergence results for the away step variant and for other variants of the Frank-Wolfe algorithm when the domain $C$ is a polyhedron.  The article~\cite{GuelM86} establishes a local linear convergence result for the away step variant for strongly convex with Lipschitz gradient under a certain kind of strict complementarity assumption.   The articles~\cite{AhipST08,KumaY11} give local linear convergence results for smooth convex functions by relying on Robinson's second-order constraint qualification.   
On the other hand, \cite{NancFSA14} shows linear convergence results for a pairwise variant of the Frank-Wolfe algorithm.  The article \cite{GarbH13} shows linear convergence for a version of the Frank-Wolfe algorithm that relies on a local linear optimization oracle. } 

\blue{
The recent articles~\cite{BeckS15,LacoJ15,PenaRS15} establish {\em global} linear convergence results for the Frank-Wolfe algorithm with away steps when the objective function is strongly convex  with Lipschitz gradient and the domain is of the form $C = \conv(A)$ for a finite set of atoms $A$.  It should be noted that both \cite{BeckS15} and~\cite{PenaRS15} were inspired by and relied upon key ideas and results first introduced in a preliminary workshop version of~\cite{LacoJ15}.
 A common feature of~\cite{BeckS15,LacoJ15,PenaRS15} is that the statement of linear convergence is given in terms of the condition number of the objective function $f$ and some type of {\em condition number} of the polytope $\conv(A)$.}  As we detail in Section~\ref{sec.FrankWolfe}, a generic version of linear convergence as in~\cite{BeckS15,LacoJ15,PenaRS15} hinges on three main premises.  The first premise is a certain {\em slope bound} on the objective function and its optimal solution set.  This first premise readily holds for strongly convex functions as it does in the unconstrained case.  
The second premise is a {\em decrease condition} on the objective function at each iteration of the algorithm.  As in the unconstrained case, the second premise holds as long as an upper bound on the Lipschitz constant of the gradient is available, or if an appropriate line-search is performed at each iteration.  The third premise, which seems to be the main technical component in each of the papers~\cite{BeckS15,LacoJ15,PenaRS15}, is a premise on the search direction selected by the algorithm at each iteration.  
Loosely speaking, this third premise is a condition on the alignment of the search direction with the gradient of the objective function at the current iterate.  The premise is that this alignment should be comparable to that of a direct step towards the optimal solution.  Unlike the first two premises, that essentially match the premises leading to the linear convergence of gradient descent in the unconstrained case, the third premise is inherent to the polytope defining the constraint set of the problem.   This third premise can be formalized in terms of a certain kind of {\em condition number} of the polytope.  In a nice extension of the unconstrained case, the rate of linear convergence of the Frank-Wolfe algorithm with away steps is determined by the product of the usual condition number of the objective function and the condition number of the polytope.  (See Theorem~\ref{thm.fw.linear} in Section~\ref{sec.FrankWolfe}.)

The central goal of this paper is to shed new light into this polytope condition number.  The three articles~\cite{BeckS15,LacoJ15,PenaRS15} make different attempts to define a suitable  condition measure along the lines of the third premise sketched above.  Each of these attempts has different merits and limitations.  One of this paper's main contributions is to show that these three kinds of condition measures, namely the {\em pyramidal width} defined by Lacoste-Julien and Jaggi~\cite{LacoJ15}, the {\em vertex-facet distance} defined by Beck and Shtern~\cite{BeckS15}, and the {\em restricted width} defined by Pe\~na, Rodr\'iguez, and Soheili~\cite{PenaRS15},  turn out to be essentially equivalent.  Perhaps more important, they are all unified via a {\em facial distance} of the polytope.  As we explain in Section~\ref{sec.main} and Section~\ref{sec.FrankWolfe}, the facial distance  can be seen as a natural quantity associated to the polytope that formalizes a key alignment condition of the search direction at each iteration of the Frank-Wolfe algorithm with away steps.  

Section~\ref{sec.main} presents the technical bulk of our paper.  
One of our  results (Theorem~\ref{main.thm}) is a characterization of the facial distance  of a polytope as the minimum distance between a \blue{proper} face of the polytope and a kind of {\em complement polytope}.  This characterization can be seen as a refinement of the {\em vertex-facet distance} proposed by Beck and Shtern~\cite{BeckS15}.  
Theorem~\ref{main.thm} motivates the name ``facial distance'' for this quantity.  Theorem~\ref{main.thm} provides a method to compute or bound the facial distance  as we illustrate in a few examples.   We also show (Theorem~\ref{main.thm.2}) that the facial distance  coincides with the pyramidal width defined by Lacoste-Julien and Jaggi~\cite{LacoJ15}.  As a byproduct of this result, we obtain a simplification of the original definition of pyramidal width.  We also give a  {\em localized} version of Theorem~\ref{main.thm} for a kind of {\em localized} version of the facial distance of the polytope (Theorem~\ref{prop.lower.bound}).

As mentioned above, Section~\ref{sec.FrankWolfe} details how the linear convergence of the Frank-Wolfe algorithm with away steps can be derived from three central premises.  The  goal of Section~\ref{sec.FrankWolfe} is to highlight the role of these three key premises, particularly the third one.  We discuss how the third premise is naturally tied to the facial distance of the polytope discussed in Section~\ref{sec.main}.  Our exposition  allows us to distill a key tradeoff in the existing bounds on the rate of convergence of the Frank-Wolfe algorithm with away steps.  On the one hand, the algorithm's rate of convergence can be bounded in terms of quantities that depend {\em only} on properties of the polytope and of the objective function but not on the optimal solution.  More precisely, for a strongly convex objective function with Lipschitz gradient the rate of convergence can be bounded in terms of the product of the condition number of the polytope and the condition number of the objective function.  This is a feature of the results \blue{in~\cite{BeckS15,LacoJ15} but not of those in~\cite{PenaRS15}} that depend on the optimal solution set. On the other hand, a {\em sharper} bound on the rate of convergence can be given if we allow it to depend on the location of the optimal solution in the polytope.   More precisely, the rate of convergence can be bounded in terms of the product of a {\em localized} condition number of the polytope  that depends on the solution set and the condition number of the objective function.  The statement of Theorem~\ref{thm.fw.linear} makes the connection between the two bounds completely transparent: The solution-independent bound is simply the most conservative solution-dependent one. Not surprisingly, the solution-dependent bound can be arbitrarily better than the solution-independent one.

Section~\ref{sec.quadratic} discusses the linear convergence property in the special but important case when the objective function is of the form $f(u) = \frac{1}{2}\ip{u}{Qu} + \ip{b}{u}$ for $Q$ positive semidefinite.  As Theorem~\ref{thm.quad.conv} in Section~\ref{sec.quadratic} shows, in this case the rate of convergence is determined by \blue{a variant of the facial distance of a suitably scaled polytope.}
  In Section~\ref{sec.non-convex} we show that the latter result extends, under suitable assumptions on the algorithm's choice of steplength, to a composite objective function of the form $f(u) = \blue{h}(Eu) + \ip{b}{u}$ where $\blue{h}$ is a strongly convex function with Lipschitz gradient and $E$ is a matrix of suitable size. (See Theorem~\ref{thm.non-strong} in Section~\ref{sec.non-convex}.)  This result is along the lines of the linear convergence result of Beck and Shtern's~\cite[Theorem 3.1]{BeckS15}.  However, our bound on the rate of convergence and proof technique are fairly different.  \blue{Both of them are  extensions of the three-premise approach described in Section~\ref{sec.FrankWolfe}.  We conclude our paper with some examples in Section~\ref{sec.examples} that illustrate the tightness of the linear convergence results stated in Theorem~\ref{thm.fw.linear} and Theorem~\ref{thm.quad.conv}.}

\blue
{
Throughout the paper we will often need to deal with multiple points in $\R^m$ and in $\R^n$.  We will consistently use $u,v$ to denote points in $\R^m$ and $w,x,y,z$ to denote points in $\R^n$.
}

\section{The facial distance  $\Phi(A)$}
\label{sec.main}
This section constitutes the technical bulk of the paper. We define the {\em facial distance  $\Phi(A)$} and prove several interesting results about it.  In particular, we show that it essentially matches the various kinds of condition measures previously defined in~\cite{BeckS15,LacoJ15,PenaRS15}.

Assume \blue{$A = \matr{a_1 & \cdots & a_n} \in \R^{m\times n}$.} For convenience we will make the following slight abuse of notation: We will write $A$ to denote both \blue{the matrix $\matr{a_1 & \cdots & a_n} \in \R^{m\times n}$ and the set of its columns $\{a_1,\dots,a_n\} \subseteq \R^m$.}  The appropriate meaning of $A$ will be clear from the context. Let $\Delta_{n-1}:=\{x\in\R^n_+: \|x\|_1 = 1\}.$   For $x\in \Delta_{n-1}$, define $I(x) \subseteq \{1,\dots,n\}$ and $S(x) \subseteq A$  as 
\[I(x):=\{i\in \{1,\dots,n\}: x_i > 0\}\]
and
$$S(x):=\{a_i: i \in I(x)\}.$$
Observe that the sets $I(x), S(x)$ define the {\em support} of $Ax$.  

Throughout the paper, $\|\cdot\|$ will denote the Euclidean norm.  
Assume $x,z\in \Delta_{n-1}$ are such that $A(x-z) \ne 0$ and let $d:=\frac{A(x-z)}{\|A(x-z)\|}$.  Define 
\begin{equation}\label{eq.def.phi}
\Phi(A,x,z) = 
\dmin_{p \in \R^m: \ip{p}{d}=1} 
\; \dmax_{s\in S(x), \, a\in A} \ip{p}{s-a},
\end{equation}
and
\[\Phi(A) = \min_{x,z\in \Delta_{n-1}:A(x-z) \ne 0} \Phi(A,x,z).
\]

\medskip

The connection between $\Phi(A,x,z)$ and the Frank-Wolfe algorithm with away steps algorithm will be made explicit as Premise 3 in Section~\ref{sec.FrankWolfe} but the basic idea is as follows.  At each iteration the algorithm starts from a current point $\blue{u}=Ax\in\conv(A)$ and selects the two atoms $a,s\in A$ that attain $\max_{s\in S(x), \, a\in A} \ip{p}{s-a}$ for $p =\nabla f(\blue{u})$.   Premise 3 requires that for $d := \frac{A(x-z)}{\|A(x-z)\|}$ the ratio $\frac{\ip{p}{s-a}}{\ip{p}{d}}$ be bounded away from zero, \blue{where $z\in \Delta_{n-1}$ is such that $u^* = Az$ is a solution to the minimization problem}.  The latter condition means the alignment of the vector $a-s$ and the direction $p$ should be comparable to the alignment of $d$ and $p.$  The need to formalize  this premise motivates the definition of the quantities $\Phi(A,x,z)$ and $\Phi(A)$.

\medskip

Notice the asymmetry between the roles $x$ and $z$ in $\Phi(A,x,z)$.  We can think of $z$ as defining an {\em anchor} point $Az\in \conv(A)$.  \blue{This anchor point determines a set of directions $d = \frac{A(x-z)}{\|A(x-z)\|}$ for $x\in\Delta_{n-1}$ with $A(x-z)\ne 0.$}  
 When $Az = 0$, the  quantity $\Phi(A,x,z)$ coincides with the quantity $\phi(A,x)$ defined in \cite{PenaRS15}.  Thus $\Phi(A)$ can be seen as a refinement of the {\em restricted width} $\phi(A) = \dmin_{x\in \Delta_{n-1}:Ax\ne 0} \phi(A,x)$ defined in~\cite{PenaRS15}.  \blue{When $0 \in \conv(A)$, we have $\Phi(A) \le \phi(A).$  More precisely, when $0 \in \conv(A)$ the restricted width $\phi(A)$ coincides with the {\em localized} variant 
 $\Phi(A,Z)$ of $\Phi(A)$ defined below for $Z = \{z\in \Delta_{n-1}:Az = 0\}$.} 
 
The quantity $\phi(A,x)$ was introduced in a form more closely related to the alternative expression \eqref{prop.identity} for $\Phi(A,x,z)$  in Proposition~\ref{prop.restricted.width} below.  The expression  \eqref{prop.identity} characterizes $\Phi(A,x,z)$ as the length of the longest segment in $\conv(A)$ in the direction $A(x-z)$ with one endpoint in $\conv(S(x))$ and the other in $\conv(A)$. 

\begin{proposition}\label{prop.restricted.width} Assume \blue{$A = \matr{a_1&\cdots&a_n} \in \R^{m\times n}$ and} $x,z\in \Delta_{n-1}$ are such that $A(x-z) \ne 0$ and  let $d:=\frac{A(x-z)}{\|A(x-z)\|}$. Then
\begin{equation}\label{prop.identity}
\Phi(A,x,z)= 
\blue{\dmax}\left\{\lambda > 0: \exists w,y\in \Delta_{n-1}, \; I(w) \subseteq I(x),\; A(w - y) = \lambda d\right\}.
\end{equation}
Furthermore, if $p$ attains the minimum value $\Phi(A,x,z)$ in \eqref{eq.def.phi} and $u=Aw, v = Ay$ maximize the right hand side in \eqref{prop.identity} then \blue {$v \in \conv(B)$ where $B = \displaystyle\Argmin_{a\in A} \ip{p}{a}$,
  $u \in \conv(A\setminus B)$, and $$\Phi(A,x,z) = \|u-v\|.$$}
\end{proposition}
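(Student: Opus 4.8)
The plan is to observe that $\Phi(A,x,z)$ is the optimal value of a linear program and to obtain \eqref{prop.identity} by dualizing it. First note that for every $p$ one has $\max_{s\in S(x),\,a\in A}\ip{p}{s-a}=\max_{s\in S(x)}\ip{p}{s}-\min_{a\in A}\ip{p}{a}$, and that $\ip{p}{s-a}$ ranges over the same values as $\ip{p}{a_i-a_j}$ for $i\in I(x)$, $j\in\{1,\dots,n\}$. Hence $\Phi(A,x,z)$ equals the value of the linear program
\[
\min_{p\in\R^m,\,t\in\R}\ t \ \ \text{ subject to }\ \ \ip{p}{a_i-a_j}\le t\ \ (i\in I(x),\ j\in\{1,\dots,n\}),\ \ \ip{p}{d}=1 .
\]
This program is feasible (some $p$ satisfies $\ip{p}{d}=1$ since $d\ne 0$) and bounded below: for feasible $p$, using $Ax\in\conv(S(x))$ and $Az\in\conv(A)$, we get $\max_{i,j}\ip{p}{a_i-a_j}\ge\ip{p}{Ax-Az}=\|A(x-z)\|\,\ip{p}{d}=\|A(x-z)\|>0$. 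So the minimum is attained and $\Phi(A,x,z)\ge\|A(x-z)\|>0$; I record $\Phi(A,x,z)>0$ for later.

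Next I would form the Lagrangian dual, with multipliers $\mu_{ij}\ge 0$ on the inequalities and a free multiplier $\nu$ on $\ip{p}{d}=1$; a routine computation gives
\[
\max\ \nu \ \ \text{ subject to }\ \ \mu\ge 0,\ \ \sum_{i,j}\mu_{ij}=1,\ \ \sum_{i,j}\mu_{ij}(a_i-a_j)=\nu d .
\]
The crucial step is the identity $\bigl\{\sum_{i,j}\mu_{ij}(a_i-a_j):\mu\ge 0,\ \sum_{i,j}\mu_{ij}=1\bigr\}=\bigl\{A(w-y):w,y\in\Delta_{n-1},\,I(w)\subseteq I(x)\bigr\}$: given $\mu$, its two marginals $w_i:=\sum_j\mu_{ij}$ (for $i\in I(x)$, zero otherwise) and $y_j:=\sum_i\mu_{ij}$ lie in $\Delta_{n-1}$, satisfy $I(w)\subseteq I(x)$, and give $\sum_{i,j}\mu_{ij}(a_i-a_j)=A(w-y)$; conversely, for such $w,y$ the product coupling $\mu_{ij}:=w_iy_j$ is feasible and realizes $A(w-y)$. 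Thus the dual value is $\max\{\nu:\exists\,w,y\in\Delta_{n-1},\,I(w)\subseteq I(x),\,A(w-y)=\nu d\}$, which is finite since $|\nu|=\|A(w-y)\|\le\diam(\conv(A))$, so the dual maximum is attained too. Strong linear-programming duality then gives equality of the two values; since $\Phi(A,x,z)>0$, the restriction ``$\lambda>0$'' costs nothing, and \eqref{prop.identity} follows.

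The third step handles the ``furthermore'' claim. Let $p$ attain the minimum in \eqref{eq.def.phi}, let $u=Aw$, $v=Ay$ attain the maximum in \eqref{prop.identity}, and put $\lambda:=\Phi(A,x,z)$, so that $\ip{p}{d}=1$, $\max_{s\in S(x),\,a\in A}\ip{p}{s-a}=\lambda$, and $A(w-y)=\lambda d$. On one hand $\ip{p}{u-v}=\ip{p}{A(w-y)}=\lambda\ip{p}{d}=\lambda$. On the other hand, since $I(w)\subseteq I(x)$, $\ip{p}{u}=\sum_{i\in I(w)}w_i\ip{p}{a_i}\le M:=\max_{s\in S(x)}\ip{p}{s}$, and $\ip{p}{v}=\sum_j y_j\ip{p}{a_j}\ge m:=\min_{a\in A}\ip{p}{a}$, while $M-m=\max_{s\in S(x),\,a\in A}\ip{p}{s-a}=\lambda$. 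Hence $\lambda=\ip{p}{u}-\ip{p}{v}\le M-m=\lambda$, forcing every inequality to be an equality; in particular $\ip{p}{a_i}=M$ for all $i\in I(w)$ and $\ip{p}{a_j}=m$ for all $j\in I(y)$. Since $\lambda>0$ we have $M>m$, so $i\in I(w)$ implies $a_i\notin B=\Argmin_{a\in A}\ip{p}{a}$, whence $u\in\conv(A\setminus B)$, and $j\in I(y)$ implies $a_j\in B$, whence $v\in\conv(B)$. Finally $\|u-v\|=\|\lambda d\|=\lambda=\Phi(A,x,z)$ because $\|d\|=1$.

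The only genuinely delicate points are the strong-duality-plus-attainment bookkeeping and the equivalence of the two descriptions of the dual feasible set via the product-coupling argument; the rest is the sandwich estimate, which relies only on the already-secured fact that $\Phi(A,x,z)>0$.
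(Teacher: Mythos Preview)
Your proof is correct and uses the same core idea as the paper: cast $\Phi(A,x,z)$ as a linear program, dualize, and extract the ``furthermore'' conclusions from complementary slackness. The one noteworthy difference is in the LP formulation. The paper splits the inner maximum into separate upper and lower bounds, writing $\Phi(A,x,z)=\min\{t+\tau: A_I\transp p\le t\1_I,\ A\transp p\ge -\tau\1,\ \ip{p}{d}=1\}$, so that the dual variables are already $w_I$ and $y$ directly. You instead impose a single bound $t$ on all pairwise differences $\ip{p}{a_i-a_j}$; your dual then carries multipliers $\mu_{ij}$ indexed by pairs, and you need the marginal/product-coupling step to identify its feasible set with $\{A(w-y):w,y\in\Delta_{n-1},\,I(w)\subseteq I(x)\}$. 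Both routes work; the paper's is slightly slicker in that it bypasses the coupling argument, while yours stays closer to the literal max--min form of the definition. Your sandwich argument for the second part is precisely complementary slackness written out by hand, matching what the paper invokes.
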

\begin{proof}
To ease notation, let $I:=I(x)$.  Observe that the right-hand-side in \eqref{prop.identity} is
\[
\begin{array}{rl}
\dmax_{w_I,y,\lambda} & \lambda \\
& A_I w_I - Ay - \lambda d = 0 \\
& \1_I \transp w_I = 1 \\
& \1\transp y = 1 \\
& w_I, y \ge 0.
\end{array}
\]
On the other hand, from the definition~\eqref{eq.def.phi} of $\Phi(A,x,z)$, it follows that
\[
\begin{array}{rl}
\Phi(A,x,z) =  \dmin_{p,t,\tau} & t+\tau \\
& A_I\transp p \le  t \1_I  \\ 
& A\transp p \ge -\tau \1  \\
& \ip{d}{p} = 1. 
\end{array}
\]
Therefore \eqref{prop.identity} follows by linear programming duality. 

\blue{Next assume $p$ attains the minimum value $\lambda=\Phi(A,x,z)$ in \eqref{eq.def.phi} and $u=Aw, v = Ay$ maximize the right hand side in \eqref{prop.identity}.  Then $(w_I,y,\lambda)$ and $(p,t,\tau)$ are respectively solutions to the above pair of linear programs for $t = \max_{a\in A} \ip{p}{a}$ and $-\tau=\min_{a\in A} \ip{p}{a}$. By complementary slackness it follows that $y_i > 0$ only if $a_i\transp p = -\tau = \min_{a\in A} \ip{p}{a}.$ Thus $v = Ay \in \conv(B)$ for $B = \Argmin_{a\in A} \ip{p}{a}.$  Similarly, by complementary slackness it follows that $w_j > 0$ only if 
$a_j\transp p = t = \max_{a\in S(x)} \ip{p}{a}$.  Thus $u = Aw \in \conv(C)$ for $C=\Argmax_{a\in S(x)} \ip{p}{a}$. Next observe that $C \subseteq A\setminus B$ because $ \max_{a\in S(x)} \ip{p}{a} - \min_{a\in A} \ip{p}{a} = t+\tau = \lambda \ge \|A(x-z)\| > 0.$ 
Finally observe that 
\[
\|u-v\| = \|A(w-y)\| = \lambda \|d\| = \lambda = \Phi(A,x,z).
\]
}
\qed
\end{proof}

\bigskip

Theorem~\ref{main.thm} below gives a characterization of $\Phi(A)$ in terms of the minimum distance between a \blue{proper} face $F$ of $\conv(A)$ and its {\em complement polytope} $\conv(A\setminus F)$.  This characterization motivates the name {\em facial distance} for the quantity $\Phi(A)$.  The minimum distance expression for $\Phi(A)$ in Theorem~\ref{main.thm} can be seen as a refinement of the so-called {\em vertex-facet distance} defined by Beck and Shtern~\cite{BeckS15}.  \blue{More precisely if we specialize Beck and Shtern's construction of vertex-facet distance~\cite[Lemma 3.1]{BeckS15} to our context and assume a suitable normalization for the facet defining hyperplanes for $\conv(A)$,  then  it follows that the vertex-facet distance of the polytope $\conv(A)$ is
\[
\dmin_{F\in\facets(\conv(A))}
 \dist(\affine(F),\vertices(\conv(A\setminus F)))
\]
provided that $A = \vertices(\conv(A))$.

As the statement of Theorem~\ref{main.thm} shows, the quantity $\Phi(A)$ has a similar geometric expression.  As a consequence of Theorem~\ref{main.thm}, it follows that when $A = \vertices(\conv(A))$ the facial distance $\Phi(A)$ is at least as large as the vertex-facet distance of $\conv(A)$.
}

\begin{theorem}\label{main.thm}
Assume \blue{$A= \matr{a_1 & \cdots & a_n }\in \R^{m\times n}$ and at least two columns of $A$} are different.  Then
\[
\Phi(A) = \blue{\dmin_{F\in\faces(\conv(A)) \atop \emptyset \subsetneq F \subsetneq\conv(A)}}
 \dist(F,\conv(A\setminus F)).
\]
Furthermore, if $F\in\faces(\conv(A))$ minimizes the right hand side, then there exist $x,z\in \Delta_{n-1}$ such that $Az\in F, Ax \in \conv(A\setminus F)$ and 
\[
\Phi(A) = \Phi(A,x,z) = \dmax_{s\in S(x), \, a\in A} \ip{p}{s-a} = \|A(x-z)\|
\]
for $p = \frac{A(x-z)}{\|A(x-z)\|}.$ 
\end{theorem}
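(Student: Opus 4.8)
The plan is to show $\Phi(A)=\delta^\star$, where $\delta^\star:=\dmin\{\dist(F,\conv(A\setminus F)):F\in\faces(\conv(A)),\ \emptyset\subsetneq F\subsetneq\conv(A)\}$, by proving the two inequalities separately and reading off the ``furthermore'' part from the proof of $\Phi(A)\le\delta^\star$. (The hypothesis that two columns differ guarantees $\conv(A)$ is not a point, so $\delta^\star$ is a minimum over a nonempty finite set; it is positive since each proper face and its complement polytope are disjoint compact sets.) For $\Phi(A)\ge\delta^\star$, fix $x,z\in\Delta_{n-1}$ with $A(x-z)\ne0$ and take $p$, $u=Aw$, $v=Ay$ as in Proposition~\ref{prop.restricted.width}, so $v\in\conv(B)$, $u\in\conv(A\setminus B)$, and $\Phi(A,x,z)=\|u-v\|$ for $B=\Argmin_{a\in A}\ip{p}{a}$. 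Then $\conv(B)$ is the face of $\conv(A)$ on which $\ip{p}{\cdot}$ is minimized, so $A\cap\conv(B)=B$; it is nonempty, and it is proper because $\Phi(A,x,z)\ge\|A(x-z)\|>0$ forces $\Argmax_{a\in S(x)}\ip{p}{a}$ to be a nonempty subset of $A\setminus B$. Hence $\conv(A\setminus\conv(B))=\conv(A\setminus B)$ and $\Phi(A,x,z)=\|u-v\|\ge\dist(\conv(B),\conv(A\setminus\conv(B)))\ge\delta^\star$; taking the infimum over $(x,z)$ gives $\Phi(A)\ge\delta^\star$.

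The other inequality and the ``furthermore'' both hinge on the following lemma, which I regard as the crux: \emph{if a proper nonempty face $F$ attains $\delta^\star$ and $v^\star\in F$, $u^\star\in\conv(A\setminus F)$ satisfy $\|u^\star-v^\star\|=\delta^\star$, then $v^\star\in\ri F$.} Suppose not, and let $F'$ be the smallest face of $F$ containing $v^\star$, so that $F'$ is a proper nonempty face of $\conv(A)$ with $A\cap F'\subsetneq A\cap F$; pick $\bar a\in(A\cap F)\setminus F'$. Since $u^\star\in\conv(A\setminus F)\subseteq\conv(A\setminus F')$, the pair $(v^\star,u^\star)$ certifies $\dist(F',\conv(A\setminus F'))\le\delta^\star$, hence equality by minimality of $\delta^\star$, so $(v^\star,u^\star)$ is also a closest pair for $(F',\conv(A\setminus F'))$. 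With $d:=(u^\star-v^\star)/\delta^\star$, optimality of $v^\star$ as the point of $F$ nearest $u^\star$ gives $\ip{d}{\bar a}\le\ip{d}{v^\star}$, while optimality of $u^\star$ as the point of $\conv(A\setminus F')$ nearest $v^\star$ gives $\ip{d}{\bar a}\ge\ip{d}{u^\star}=\ip{d}{v^\star}+\delta^\star>\ip{d}{v^\star}$ — a contradiction.

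Granting the lemma, let $F$ be any minimizer of $\delta^\star$ and $(v^\star,u^\star)$ realize its distance; by the lemma $v^\star\in\ri F$, so $d:=(u^\star-v^\star)/\delta^\star$ is orthogonal to $\affine(F)$ and $\ip{d}{\cdot}\equiv\ip{d}{v^\star}$ on $F$, while optimality of $u^\star$ gives $\ip{d}{a}\ge\ip{d}{v^\star}+\delta^\star$ for every atom $a\notin F$; thus $\dmin_{a\in A}\ip{d}{a}=\ip{d}{v^\star}$. Choose $z\in\Delta_{n-1}$ with $Az=v^\star$ and $x\in\Delta_{n-1}$ supported on the atoms not in $F$ with $Ax=u^\star$, so $A(x-z)=\delta^\star d\ne0$ and $S(x)\subseteq A\setminus F$; since $u^\star$ is a convex combination of atoms of $S(x)$ each satisfying $\ip{d}{a}\ge\ip{d}{u^\star}$, these inner products must all equal $\ip{d}{u^\star}$, so $\ip{d}{s}=\ip{d}{u^\star}$ for $s\in S(x)$. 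With $p:=d$ (which satisfies $\ip{p}{d}=1$),
\[
\dmax_{s\in S(x),\,a\in A}\ip{p}{s-a}=\dmax_{s\in S(x)}\ip{d}{s}-\dmin_{a\in A}\ip{d}{a}=\ip{d}{u^\star}-\ip{d}{v^\star}=\delta^\star,
\]
so $\Phi(A,x,z)\le\delta^\star$ and hence $\Phi(A)\le\delta^\star$. Combined with the first step this gives $\Phi(A)=\delta^\star$, and since then $\delta^\star=\Phi(A)\le\Phi(A,x,z)\le\delta^\star$, this $(x,z)$ together with $p=A(x-z)/\|A(x-z)\|=d$ attains all the equalities in the ``furthermore,'' with $Az=v^\star\in F$ and $Ax=u^\star\in\conv(A\setminus F)$; as $F$ was an arbitrary minimizer, the ``furthermore'' holds in general.

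The single delicate point is the lemma: without it the minimizing face could meet its complement polytope at a relative-boundary point, destroying the orthogonality $d\perp\affine(F)$ that makes the choice $p=d$ achieve the bound; the two-sided projection argument using an atom $\bar a$ of $F$ lying off the face $F'$ is exactly what excludes this. Everything else is bookkeeping with linear-programming duality (Proposition~\ref{prop.restricted.width}) and elementary variational inequalities for metric projections.
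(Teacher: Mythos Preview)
Your proof is correct and follows the same two-inequality skeleton as the paper: $\Phi(A)\ge\delta^\star$ via Proposition~\ref{prop.restricted.width}, and $\Phi(A)\le\delta^\star$ by exhibiting a specific pair $(x,z)$ at which $p=d$ achieves the value $\delta^\star$.

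The genuine difference is in the $\le$ direction. The paper does not prove your lemma; instead it lets $(u,v)$ be a closest pair for the minimizing $F$, introduces the auxiliary face $G=\{s\in\conv(A\setminus F):\ip{u-v}{s-u}=0\}$, and then writes ``by taking a face of $F$ if necessary'' to force $\ip{u-v}{t-v}=0$ on $F$, after which it proves the analogue of your claim $\min_{a\in A}\ip{d}{a}=\ip{d}{v^\star}$ by a separate contradiction (moving $u$ toward an offending atom inside $\conv(A\setminus F)$). Your route replaces all of this with the structural lemma that $v^\star\in\ri F$ for \emph{every} minimizing $F$, proved by the two-sided projection inequality at a single atom $\bar a\in(A\cap F)\setminus F'$. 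Once $v^\star\in\ri F$, orthogonality $d\perp\affine(F)$ is automatic, the optimality of $u^\star$ alone gives $\ip{d}{a}\ge\ip{d}{u^\star}$ for every atom outside $F$, and the averaging argument pins $\ip{d}{s}=\ip{d}{u^\star}$ on $S(x)$ without ever introducing $G$. The two contradictions are close cousins---both ultimately rely on the observation that the sub-face $F'$ is again a minimizer with the same closest pair, so that $u^\star$ satisfies the projection inequality over the larger set $\conv(A\setminus F')$---but your packaging is cleaner: it avoids the face-replacement bookkeeping, drops the auxiliary $G$, and delivers the ``furthermore'' for the original minimizing $F$ directly.
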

\begin{proof}
We first show that 
\begin{equation}\label{to.prove.thm1}
\Phi(A) \ge
\blue{\dmin_{F\in\faces(\conv(A)) \atop \emptyset \subsetneq F \subsetneq\conv(A)}}
\dist(F,\conv(A\setminus F)).
\end{equation}
To that end, assume $x,z\in \Delta_{n-1}$ are such that $A(x-z) \ne 0$ and let $d:=\frac{A(x-z)}{\|A(x-z)\|}$.  Let $p\in\R^m$ be a vector attaining the minimum in \eqref{eq.def.phi}.  Consider the face $F$ of 
$\conv(A)$ defined as
\[
F = \Argmin_{v\in \conv(A)} \ip{p}{v}.
\]
\blue{Observe that $\emptyset \ne F\ne \conv(A)$ because $\conv(A)$ is a nonempty compact set and $Ax\not \in F$.}  From Proposition~\ref{prop.restricted.width} it follows that $\Phi(A,x,z) = \|u-v\|$ for some $v\in F$ and $u\in\conv(A\setminus F).$  Therefore, 
$$\dist(F,\conv(A\setminus F)) \le \| u -  v\|  = \Phi(A,x,z).$$ 
Since this holds for any $x,z\in \Delta_{n-1}$ such that $A(x-z) \ne 0$, \blue{inequality \eqref{to.prove.thm1} follows.}

Next we show the reverse inequality 
\begin{equation}\label{to.prove.thm1.other}
\Phi(A) \le \blue{
\dmin_{F\in\faces(\conv(A)) \atop \emptyset \subsetneq F \subsetneq\conv(A)}} \dist(F,\conv(A\setminus F)).
\end{equation}
  To that end, assume $F$ minimizes \blue{the right-hand-side in \eqref{to.prove.thm1.other}}.  Let  $u \in \conv(A\setminus F)$ and $v\in F$ be such that 
\begin{equation}\label{eq.distance}
\dist(F,\conv(A\setminus F)) = \|u - v\|.
\end{equation}
\blue{
The optimality conditions for $u$ in \eqref{eq.distance} 
imply that $\ip{v-u}{s-u} \le 0$ for all $s\in \conv(A\setminus F)$. Likewise the optimality conditions for $v$ in \eqref{eq.distance} imply that $\ip{u-v}{t-v} \le 0$ for all $t\in F$.
Let $G \in \faces(\conv(A\setminus F))$ be the face defined by 
\[
G:=\{s\in \conv(A\setminus F): \ip{u-v}{s-u} = 0\}.
\]
By taking a face of $F$ if necessary, we can assume that 
$$\ip{u-v}{t-v} = 0 \text{ for all } t\in F.$$  
Therefore,}
\begin{equation}\label{eq.orthogonal}
\ip{u-v}{s-u} = \ip{u-v}{t-v} = 0\; \text{ for all } \; t\in F, s\in G.
\end{equation}
Next we claim that 
\begin{equation}\label{eq.acute}
\ip{u-v}{a-v} \ge 0 \; \text{ for all }\;a\in A.
\end{equation}  
We prove this claim by contradiction.  Assume $a\in A$ is such that $\ip{u-v}{a-v} < 0$. Then $\ip{u-v}{a-u} = \ip{u-v}{a-v} - \|u-v\|^2 < 0$ and from \eqref{eq.orthogonal} we get $a\not \in F$.  Hence for $\lambda > 0$ sufficiently small the point $u+\lambda(a-u) \in \conv(A\setminus F)$ satisfies
\[
\|u+\lambda(a-u) - v\|^2 = \|u-v\|^2 + 2\lambda\ip{u-v}{a-u} + \lambda^2\|v \|^2
< \|u-v\|^2, 
\]
which contradicts~\eqref{eq.distance}.  Thus \eqref{eq.acute} is proven.

Let $x,z \in \Delta_{n-1}$ be such that $u=Ax, v= Az$ and $S(x) = A\cap G$.  The latter is possible since $u\in G$.   We finish by observing that for $p=d=\frac{A(x-z)}{\|A(x-z)\|} = \frac{u-v}{\|u-v\|}$
\begin{align*}
\Phi(A,x,z) &\le \dmax_{s\in S(x), \, a\in A} \ip{p}{s-a} \\
&\le \dmax_{s\in G,a\in A} \ip{p}{s-a} \\&= \ip{p}{u-v} \\&= \|u-v\| \\
& = \dist(F,\conv(A\setminus F)).
\end{align*}
\blue{The first step follows from the construction of $\Phi(A,x,z)$.  The second step follows because $S(x) \subseteq G$.}
The third and \blue{fourth} steps follow from \eqref{eq.orthogonal} and \eqref{eq.acute} and the choice of $p$.  The \blue{fifth} step follows from~\eqref{eq.distance}.  
\blue{Finally, since  $\Phi(A) \le \Phi(A,x,z) \le \dist(F,\conv(A\setminus F))$ inequality \eqref{to.prove.thm1.other} follows. }
\qed
\end{proof}

\bigskip

From Theorem~\ref{main.thm} we can readily compute the values of $\Phi(A)$ in the special cases detailed in the examples below. We use the following notation.   Let $e \in \R^m$ denote the vector with all components equal to one, and for $i=1,\dots,m$ let $e_i\in \R^m$ denote the vector with $i$-th component equal to one and all others equal to zero.

\begin{example} Suppose $A = \{0,1\}^m \subseteq \R^m.$
By Theorem~\ref{main.thm}, induction, and symmetry it follows that
\[
\Phi(A) = \dist(0,\conv(A\setminus\{0\})) = \dist(0,\conv\{e_1,\dots,e_m\}) = \frac{\|e\|}{m}=\frac{1}{\sqrt{m}}.
\]

\end{example}
\begin{example}
Let $A = \{e_1,\dots,e_m\}\subseteq \R^m$.  By Theorem~\ref{main.thm} and the facial structure of $\conv(A)$ it follows that 
\begin{align*}
\Phi(A) &= \dmin_{\emptyset \subsetneq S \subsetneq A} 
\dist(\conv(S),\conv (A\setminus S)) \\
& = \min_{\emptyset \subsetneq S \subsetneq A} \left\| \frac{\sum_{s\in S}s}{\vert S\vert} - \frac{\sum_{a\in A\setminus S}a}{\vert A\setminus S\vert} \right \| \\
&= \min_{r\in\{1,\dots,m-1\}}\sqrt{\frac{m}{r(m-r)}} \\
&= \left\{\begin{array}{ll}
 \frac{2}{\sqrt{m}} & \text{ if } m \text{ is even } \\
 \frac{2}{\sqrt{m-\frac{1}{m}}}   & \text{ if } m \text{ is odd. } 
 \end{array} \right.
\end{align*}
\end{example}

We note that the values for $\Phi(A)$ in the above examples match exactly the values of the {\em pyramidal width} defined by Lacoste-Julien and Jaggi~\cite{LacoJ15}.
Theorem~\ref{main.thm.2} below shows that indeed the pyramidal width is identical to the condition measure $\Phi(A)$.  As a byproduct of this identity,  the original definition of pyramidal width given in~\cite{LacoJ15} can be simplified.

Lacoste-Julien and Jaggi define the {\em pyramidal directional width} of a finite set $A\subseteq \R^m$ with respect to a direction $r\in\R^m \setminus\{0\}$ and a base point $u \in \conv(A)$ as
\[
\pyrdir(A,r,u) := \dmin_{S\in S_u} \dmax_{a\in A, s\in S} \ip{\frac{r}{\|r\|}}{a-s}
\]
where $S_u = \{S\subseteq A: u \in \conv(S)\}.$
Lacoste-Julien and Jaggi also define the {\em pyramidal  width} of a set $A$ as
\[
\pyrwidth(A):= 
\dmin_{F\in \faces(\conv(A))\atop u \in F, r \in \cone(F-u)\setminus\{0\}}
 \pyrdir(F\cap A, r,u).
\]
Observe that $r \in \cone(F-u)\setminus\{0\}$ if and only if $r$ is a positive multiple of some $v-u$ where $u,v\in F$ and $u-v \ne 0$.  Therefore
\begin{equation}\label{eq.pyrwidth.def}
\pyrwidth(A) = \dmin_{ F\in\faces(\conv(A))\atop u,v\in F, u\ne v} \pyrdir(F\cap A, v-u,u).
\end{equation}
\blue{
We note that in the original definition of the pyramidal directional width in \cite{LacoJ15} Lacoste-Julien and Jaggi use the following more restricted set $\tilde S_u$ instead of $S_u$   $$\tilde S_u = \{S\subseteq A: u \text{ is a proper convex combination of elements in } \; S\}.$$
The larger set $S_u$ that we use above yields an equivalent definition of pyramidal directional width because for all $u\in\conv(A)$ and $r\in\R^m \setminus\{0\}$
\[
\dmin_{S\in S_u} \dmax_{a\in A, s\in S} \ip{\frac{r}{\|r\|}}{a-s} = \dmin_{S\in \tilde S_u} \dmax_{a\in A, s\in S} \ip{\frac{r}{\|r\|}}{a-s}.
\]
}

\begin{theorem}\label{main.thm.2}
Assume \blue{$A= \matr{a_1 & \cdots & a_n }\in \R^{m\times n}$ and at least two columns of $A$} are different.  Then
\begin{equation}\label{eq.pyrwidth}
 \Phi(A) =  \min_{u,v\in\conv(A), u \ne v} \pyrdir(A, v-u,u) = \pyrwidth(A).
\end{equation}
\end{theorem}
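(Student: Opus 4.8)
The plan is to prove the chain of equalities in \eqref{eq.pyrwidth} by establishing two things: first, that $\min_{u,v\in\conv(A),\,u\ne v}\pyrdir(A,v-u,u)=\pyrwidth(A)$, and second, that this common quantity equals $\Phi(A)$, which by Theorem~\ref{main.thm} equals $\min_{F}\dist(F,\conv(A\setminus F))$ over proper nonempty faces $F$. I would handle the first equality by a ``it suffices to take $F=\conv(A)$'' argument: the definition \eqref{eq.pyrwidth.def} minimizes $\pyrdir(F\cap A,v-u,u)$ over all faces $F$ and all $u,v\in F$ with $u\ne v$, whereas $\min_{u,v\in\conv(A)}\pyrdir(A,v-u,u)$ is exactly the term coming from the single face $F=\conv(A)$. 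So $\pyrwidth(A)\le\min_{u,v}\pyrdir(A,v-u,u)$ is immediate. For the reverse inequality I would show that for any proper face $F$ and any $u,v\in F$, one has $\pyrdir(F\cap A,v-u,u)\ge\min_{u',v'\in\conv(A)}\pyrdir(A,v'-u',u')$; the natural route is to argue that $\Phi(F\cap A)\ge\Phi(A)$ when $F$ is a face of $\conv(A)$ (since every proper face of $\conv(F\cap A)$ is a proper face of $\conv(A)$, and complement polytopes only shrink, so the facial-distance minimum over $F$ is taken over a subcollection with smaller or equal complements — this needs the characterization from Theorem~\ref{main.thm}), combined with the $F=\conv(A)$ identification of $\pyrdir$ with $\Phi$ that I describe next.

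The core of the proof is the identity $\Phi(A)=\min_{u,v\in\conv(A),\,u\ne v}\pyrdir(A,v-u,u)$. Here I would exploit Proposition~\ref{prop.restricted.width} and the definition \eqref{eq.def.phi} directly. Recall $\pyrdir(A,r,u)=\min_{S\in S_u}\max_{a\in A,s\in S}\ip{r/\|r\|}{a-s}$ where $S_u=\{S\subseteq A:u\in\conv(S)\}$. For the inequality $\Phi(A)\le\min_{u,v}\pyrdir(A,v-u,u)$: given $u,v\in\conv(A)$ with $u\ne v$, let $S^*\in S_u$ attain the inner minimum, pick $x\in\Delta_{n-1}$ with $Ax=u$ and $S(x)=S^*$ (possible since $u\in\conv(S^*)$ and $S^*\in S_u$ — though one must be slightly careful that $S(x)$ can be made to equal $S^*$ rather than just a subset; taking $x$ in the relative interior of the simplex spanned by $S^*$ gives $S(x)=S^*$), and pick $z\in\Delta_{n-1}$ with $Az=v$. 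Then $d=\frac{A(x-z)}{\|A(x-z)\|}=\frac{u-v}{\|u-v\|}$, and choosing $p=d$ in \eqref{eq.def.phi} gives $\Phi(A,x,z)\le\max_{s\in S(x),a\in A}\ip{d}{s-a}=\max_{s\in S^*,a\in A}\ip{\frac{u-v}{\|u-v\|}}{s-a}=\pyrdir(A,v-u,u)$ (noting the sign: $v-u$ versus $u-v$ flips $\max_{a,s}\ip{\cdot}{a-s}$ to $\max_{a,s}\ip{\cdot}{s-a}$, which is what appears in $\Phi$). Hence $\Phi(A)\le\Phi(A,x,z)\le\pyrdir(A,v-u,u)$, and minimizing over $u,v$ finishes this direction.

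For the reverse inequality $\Phi(A)\ge\min_{u,v}\pyrdir(A,v-u,u)$, I would use the ``furthermore'' part of Theorem~\ref{main.thm}: there is a proper face $F$ and $x,z\in\Delta_{n-1}$ with $Az\in F$, $Ax\in\conv(A\setminus F)$, such that $\Phi(A)=\Phi(A,x,z)=\max_{s\in S(x),a\in A}\ip{p}{s-a}=\|A(x-z)\|$ for $p=\frac{A(x-z)}{\|A(x-z)\|}$. Setting $u:=Ax$ and $v:=Az$, we have $u\ne v$ and $u\in\conv(S(x))$, so $S(x)\in S_u$; then $\pyrdir(A,v-u,u)\le\max_{s\in S(x),a\in A}\ip{\frac{u-v}{\|u-v\|}}{s-a}=\max_{s\in S(x),a\in A}\ip{p}{s-a}=\Phi(A)$, so the minimum over $u,v$ is $\le\Phi(A)$. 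Combining the two directions gives $\Phi(A)=\min_{u,v}\pyrdir(A,v-u,u)$, and together with the first equality this yields \eqref{eq.pyrwidth}.

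The step I expect to be the main obstacle is the reverse inequality $\pyrwidth(A)\ge\Phi(A)$ (equivalently, reducing the multi-face minimization in \eqref{eq.pyrwidth.def} to the single face $F=\conv(A)$): one must check carefully that $\Phi(F\cap A)\ge\Phi(A)$ for every proper face $F$, which relies on the geometric characterization of Theorem~\ref{main.thm} and on the fact that faces of $\conv(F\cap A)$ are faces of $\conv(A)$ while the complement $\conv((F\cap A)\setminus G)$ is contained in $\conv(A\setminus G)$, so the distances can only increase — plus the matching observation that $\pyrdir(F\cap A, v-u,u)$ for $u,v\in F$ is exactly the analogue of $\min_{u',v'}\pyrdir$ computed for the set $F\cap A$ in the affine hull of $F$. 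The bookkeeping about whether $S(x)$ can be chosen to equal a prescribed $S^*\in S_u$ (as opposed to merely being contained in it) also needs a short justification, but this is handled by the equivalence of $S_u$ and $\tilde S_u$ noted in the excerpt together with taking relative-interior representations.
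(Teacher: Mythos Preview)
Your proposal is correct and follows essentially the same route as the paper's proof: both inequalities for $\Phi(A)=\min_{u,v}\pyrdir(A,v-u,u)$ are handled exactly as you describe (via the ``furthermore'' clause of Theorem~\ref{main.thm} for one direction and the choice $p=d$ for the other), and the reduction of $\pyrwidth(A)$ to the single face $F=\conv(A)$ through $\Phi(F\cap A)\ge\Phi(A)$ (using Theorem~\ref{main.thm}) is precisely the paper's final step. One minor simplification the paper makes: it only requires $S(x)\subseteq S^*$ rather than equality, which already suffices since $\max_{s\in S(x),\,a\in A}\ip{p}{s-a}\le\max_{s\in S^*,\,a\in A}\ip{p}{s-a}$ and sidesteps the bookkeeping you flag.
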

\begin{proof}
Assume $F\in\faces(A)$ minimizes $\dist(F,\conv(A\setminus F))$
and $x,z\in \Delta_{n-1}$ are \blue{chosen as in the second statement of Theorem~\ref{main.thm} so that for $p = \frac{A(x-z)}{\|A(x-z)\|}$ we have
\[
\Phi(A) = \Phi(A,x,z) = \dmax_{s\in S(x), \, a\in A} \ip{p}{s-a}. 
\]
}
Therefore for $u=Ax$ and  $v= Az$ we have $S(x) \in S_u$ and
\begin{align*}
\Phi(A) &= \dmax_{a\in A, s\in S(x)} \ip{\frac{v-u}{\|v-u\|}}{a-s} \\ &\ge 
\dmin_{S\in S_u} \dmax_{a\in A, s\in S} \ip{\frac{v-u}{\|v-u\|}}{a-s}\\
&= \pyrdir(A,v-u,u).
\end{align*}
Hence we conclude that $ \Phi(A) \ge \dmin_{u,v\in\conv(A), u \ne v} \pyrdir(A, v-u,u).$  

On the other hand, for $u,v\in\conv(A), u \ne v$ let $S \in S_u$ be such that
\[
\pyrdir(A,v-u,u) :=  \dmax_{a\in A, s\in S} \ip{\frac{v-u}{\|v-u\|}}{a-s}.
\]
\blue{Since $S\in S_u$, we have $u \in \conv(S)$ and thus there exists $x\in \Delta_{n-1}$ such that $S(x) \subseteq S$ and $Ax = u$.  Let $z\in \Delta_{n-1}$ be such that $Az = v$.  }
Taking  $p=d = \frac{A(x-z)}{\|A(x-z)\|} = -\frac{v-u}{\|v-u\|}$ it follows that 
\begin{align*}
\Phi(A) \le \Phi(A,x,z) 
&\le \dmax_{s\in S(x), \, a\in A} \ip{p}{s-a} \\
&=\dmax_{a\in A, s\in S(x)} \ip{\frac{v-u}{\|v-u\|}}{a-s} \\
&\le\dmax_{a\in A, s\in S} \ip{\frac{v-u}{\|v-u\|}}{a-s} \\
&= \pyrdir(A,v-u,u).
\end{align*}
Consequently $ \Phi(A) \le \dmin_{u,v\in\conv(A), u \ne v} \pyrdir(A, v-u,u)$ as well.    Therefore the identity between the first two terms in \eqref{eq.pyrwidth} follows.  
To finish, observe that by~\eqref{eq.pyrwidth.def}
\begin{align*}
\pyrwidth(A) &= \dmin_{F\in \faces(\conv(A))} \min_{u,v\in\conv(F), u \ne v} \pyrdir(F\cap A, v-u,u) \\ &= \dmin_{F\in \faces(\conv(A)) \atop \exists u,v\in F, \; u\ne v } \Phi(F\cap A) \\ &= \Phi(A).
\end{align*}
The second step follows by applying the identity between the first two terms in \eqref{eq.pyrwidth} to each $F\cap A$.  The third step follows because  \blue{ on the one hand $\conv(A) \in \faces(\conv(A))$ implies 
$$
\dmin_{F\in \faces(\conv(A)) \atop \exists u,v\in F, \; u\ne v } \Phi(F\cap A) \le \Phi(A);
 $$
 and on the other hand Theorem~\ref{main.thm} implies
 $$
\dmin_{F\in \faces(\conv(A)) \atop \exists u,v\in F, \; u\ne v } \Phi(F\cap A) \ge \Phi(A).
 $$
\qed}
\end{proof}

\bigskip

As we will see in Section~\ref{sec.FrankWolfe} below, the following {\em localized} variant of $\Phi(A)$, which could be quite a bit larger than $\Phi(A)$ plays a role in the linear  convergence rate of the Frank-Wolfe algorithm.   Assume $A= \matr{a_1&\cdots&a_n}\subseteq \R^{m\times n}$ and at least two columns of $A$ are different. For $z \in \Delta_{n-1}$ let
\[
\Phi(A,z) := \min_{x\in \Delta_{n-1}: A(x-z) \ne 0} \Phi(A,x,z),
\]
and for \blue{nonempty} $Z \subseteq \Delta_{n-1}$ let
\[
\Phi(A,Z) := \inf_{z\in Z} \Phi(A,z).
\]
Theorem~\ref{prop.lower.bound} gives a localized version of Theorem~\ref{main.thm} for $\Phi(A,Z)$.  Its proof relies on the following localized version of Proposition~\ref{prop.restricted.width}.  We omit the proof of Proposition~\ref{prop.modified.width} as it is a straightforward modification of the proof of Proposition~\ref{prop.restricted.width}.
\begin{proposition}\label{prop.modified.width} Assume 
\blue{$A =\matr{a_1&\cdots&a_n} \in \R^{m\times n}$ and} $x,z\in \Delta_{n-1}$ are such that $A(x-z) \ne 0$ and  let $d:=\frac{A(x-z)}{\|A(x-z)\|}$. If $F\in \faces(\conv(A))$ contains $Az$ then \begin{multline*}
\min_{p:\ip{p}{d} = 1} \max_{s\in S(x), a \in F\cap A} \ip{p}{s - a} =
\\
\max\left\{\lambda > 0: \exists w,y\in \Delta_{n-1}, \; I(w) \subseteq I(x), Ay\in F,\; A(w - y) = \lambda d\right\}.
\end{multline*}
Furthermore, if $p$ minimizes the left hand side  and $u=Aw, v = Ay$ maximize the right hand side  then $v \in \conv(B)$ and $u \in \conv(A\setminus B)$ where $B = \displaystyle\Argmin_{a\in F\cap A} \ip{p}{a}$\blue{, and \[\min_{p:\ip{p}{d} = 1} \max_{s\in S(x), a \in F\cap A} \ip{p}{s - a} = \|u-v\|.\]}

\end{proposition}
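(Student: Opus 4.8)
The proof follows the proof of Proposition~\ref{prop.restricted.width} almost verbatim; the only genuinely new point is bookkeeping the face constraint. Write $I := I(x)$ and let $J := \{j : a_j \in F\}$ be the index set of the atoms lying on the face $F$. The one fact to recall is that for a face $F$ of $\conv(A)$ and any $y \in \Delta_{n-1}$ one has $Ay \in F$ if and only if $I(y) \subseteq J$: since $F = \conv(\{a_j : j \in J\})$, a convex combination of atoms of $\conv(A)$ can lie in the face $F$ only if every atom used with positive weight already belongs to $F$. With this translation, the right-hand side of the claimed identity is the optimal value of the linear program that maximizes $\lambda$ over $w_I, y_J \ge 0$ with $\1_I\transp w_I = 1$, $\1_J\transp y_J = 1$, and $A_I w_I - A_J y_J = \lambda d$, where $A_I$ and $A_J$ denote the submatrices of $A$ with columns indexed by $I$ and $J$. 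This LP is feasible with positive optimal value, the point $(w_I,y_J,\lambda) = (x_I, z_J, \|A(x-z)\|)$ being admissible — here $z_J$ is legitimate precisely because $Az \in F$ forces $I(z) \subseteq J$ — and it is bounded since $\lambda = \|A(w_I - y_J)\|$ is at most the diameter of $\conv(A)$.

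On the left-hand side, using $S(x) = \{a_i : i \in I\}$, $F \cap A = \{a_j : j \in J\}$ and introducing $t = \max_{i \in I}\ip{p}{a_i}$ and $-\tau = \min_{j \in J}\ip{p}{a_j}$, the quantity $\min_{p:\ip{p}{d}=1}\max_{s\in S(x),\, a\in F\cap A}\ip{p}{s-a}$ equals the optimal value of the linear program that minimizes $t + \tau$ over $p,t,\tau$ subject to $A_I\transp p \le t\1_I$, $A_J\transp p \ge -\tau\1_J$, and $\ip{d}{p} = 1$. This is precisely the LP dual of the maximization problem above, so LP strong duality — available thanks to the feasibility and boundedness just recorded — gives the asserted identity, with both optimal values attained.

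For the ``furthermore'' statement, let $p$ attain the minimum and $(w_I, y_J, \lambda)$ attain the maximum, with $t = \max_{a\in S(x)}\ip{p}{a}$ and $-\tau = \min_{a\in F\cap A}\ip{p}{a}$. Complementary slackness yields: $y_j > 0$ only if $\ip{p}{a_j} = -\tau$, so $v = Ay \in \conv(B)$ with $B = \Argmin_{a\in F\cap A}\ip{p}{a}$; and $w_i > 0$ only if $\ip{p}{a_i} = t$, so $u = Aw \in \conv(C)$ with $C = \Argmax_{a\in S(x)}\ip{p}{a}$. Since $t + \tau = \lambda \ge \|A(x-z)\| > 0$, no atom can attain both $t$ and $-\tau$, hence $C \cap B = \emptyset$ and $u \in \conv(C) \subseteq \conv(A \setminus B)$. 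Finally $\|u - v\| = \|A(w - y)\| = \lambda\|d\| = \lambda$, the common optimal value.

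The only step requiring any care — and the one I would single out as the crux — is the first: correctly reducing the geometric constraint $Ay \in F$ to the combinatorial support condition $I(y) \subseteq J$, and confirming that the two linear programs so obtained are a bona fide primal–dual pair whose primal admits the feasibility witness $(x_I, z_J, \|A(x-z)\|)$, which is where the hypothesis $Az \in F$ enters. Once this is in place, the complementary-slackness argument is word-for-word the one in Proposition~\ref{prop.restricted.width}.
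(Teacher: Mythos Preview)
Your proof is correct and follows precisely the approach the paper indicates: the paper omits the proof entirely, stating only that it is ``a straightforward modification of the proof of Proposition~\ref{prop.restricted.width},'' and what you have written is exactly that modification carried out in detail. Your identification of the one nontrivial bookkeeping point---reducing the geometric constraint $Ay\in F$ to the support condition $I(y)\subseteq J$ via the face property, and using the hypothesis $Az\in F$ to certify primal feasibility---is exactly the content that distinguishes this proposition from Proposition~\ref{prop.restricted.width}.
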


\begin{theorem} \label{prop.lower.bound} Assume \blue{$A= \matr{a_1 & \cdots & a_n }\in \R^{m\times n}$ and at least two columns of $A$} are different.  Then for \blue{nonempty} $Z\subseteq \Delta_{n-1}$ 
\[
\Phi(A,Z) \ge \dmin_{G\in\faces(F)\atop \blue{\emptyset\ne G\ne \conv(A)}} \dist(G,\conv(A\setminus G))
\]
where $F$ is the smallest face of $\conv(A)$ containing $AZ = \{Az: z\in Z\}$.  
\end{theorem}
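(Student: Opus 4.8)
The plan is to establish the claimed inequality pointwise: I will show that the right-hand side is a lower bound for $\Phi(A,x,z)$ for \emph{every} $z\in Z$ and every $x\in\Delta_{n-1}$ with $A(x-z)\ne 0$, and then take the infimum over $z$ and the minimum over $x$. The feature that makes this uniform is that $F$, the smallest face of $\conv(A)$ containing $AZ$, depends only on $Z$ and not on the particular $z$, while $Az\in F$ for every $z\in Z$. This is exactly the hypothesis needed to invoke the localized Proposition~\ref{prop.modified.width}.

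So fix $z\in Z$ and $x\in\Delta_{n-1}$ with $A(x-z)\ne 0$, and put $d=\frac{A(x-z)}{\|A(x-z)\|}$. Since $F\cap A\subseteq A$, restricting the inner maximization in \eqref{eq.def.phi} from $A$ to $F\cap A$ can only make the min--max smaller, so
\[
\Phi(A,x,z)=\min_{p:\,\ip{p}{d}=1}\;\max_{s\in S(x),\,a\in A}\ip{p}{s-a}\;\ge\;\min_{p:\,\ip{p}{d}=1}\;\max_{s\in S(x),\,a\in F\cap A}\ip{p}{s-a}.
\]
Now apply Proposition~\ref{prop.modified.width} to this $F$ (which contains $Az$): taking $p$ to attain the right-hand minimum and $B:=\Argmin_{a\in F\cap A}\ip{p}{a}$, the right-hand side equals $\|u-v\|$ for some $v\in\conv(B)$ and $u\in\conv(A\setminus B)$. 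Let $G:=\Argmin_{v'\in F}\ip{p}{v'}$. A linear functional attains its minimum over the nonempty compact set $F$, so $G$ is a nonempty face of $F$, hence a face of $\conv(A)$; and since $F=\conv(F\cap A)$ we have $G\cap A=B$, so $G=\conv(B)$ and $A\setminus B=A\setminus G$. Therefore $v\in G$ and $u\in\conv(A\setminus G)$, which yields $\Phi(A,x,z)\ge\|u-v\|\ge\dist(G,\conv(A\setminus G))$.

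It remains only to check that $G$ is admissible, i.e. $\emptyset\ne G\ne\conv(A)$; we already have $G\ne\emptyset$. If $F\subsetneq\conv(A)$ then $G\subseteq F\subsetneq\conv(A)$. If instead $F=\conv(A)$, then $Ax,Az\in F$ and $\ip{p}{Ax}-\ip{p}{Az}=\|A(x-z)\|\,\ip{p}{d}=\|A(x-z)\|>0$, so $\ip{p}{\cdot}$ is nonconstant on $F$ and $G=\Argmin_{v'\in F}\ip{p}{v'}\subsetneq F=\conv(A)$. Since $G$ is in all cases a face of the fixed set $F$, we conclude $\Phi(A,x,z)\ge\min_{G'\in\faces(F),\,\emptyset\ne G'\ne\conv(A)}\dist(G',\conv(A\setminus G'))$ for every admissible pair $(x,z)$, and taking $\inf_{z\in Z}$ of $\min_{x}$ finishes the proof. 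I do not expect a genuine obstacle here: once Proposition~\ref{prop.modified.width} is available the argument closely parallels the proof of Theorem~\ref{main.thm}, the only mildly delicate points being the bookkeeping identity $A\setminus B=A\setminus G$ (which uses $F=\conv(F\cap A)$ and the fact that a face of a polytope is the convex hull of the atoms it contains) and the case split showing $G\ne\conv(A)$.
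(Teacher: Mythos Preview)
Your proof is correct and follows essentially the same route as the paper's: restrict the inner maximum from $A$ to $F\cap A$, invoke Proposition~\ref{prop.modified.width} with a minimizing $p$, set $G=\Argmin_{v\in F}\ip{p}{v}$, and read off $\Phi(A,x,z)\ge\|u-v\|\ge\dist(G,\conv(A\setminus G))$. The only differences are expository: you spell out the identification $G\cap A=B$ (hence $A\setminus B=A\setminus G$) and handle $G\ne\conv(A)$ via a case split on whether $F=\conv(A)$, whereas the paper simply asserts $Ax\notin G$ --- both arguments are valid and equivalent in substance.
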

\begin{proof}
This is a straightforward modification of the proof of Theorem~\ref{main.thm}.   Observe that for $x\in \Delta_{n-1}, z\in Z$ \blue{with $A(x-z) \ne 0$} and $d = \frac{A(x-z)}{\|A(x-z)\|}$
\[
\Phi(A,x,z) =  \blue{\min_{p:\ip{p}{d} = 1} \max_{s\in S(x), a \in A} \ip{p}{s - a}}
\ge  \min_{p:\ip{p}{d} = 1} \max_{s\in S(x), a \in F\cap A} \ip{p}{s - a}.
\]
Let $p\in\R^m$ be a vector attaining the minimum in the above right hand side and consider the face $G$ of 
$F$ defined as
\[
G = \Argmin_{v\in F} \ip{p}{v}.
\]
\blue{Observe that $\emptyset \ne G \ne \conv(A)$ because $F$ is a nonempty compact set and $Ax \not \in G.$}
From Proposition~\ref{prop.modified.width} it follows that for some $u \in \conv(A\setminus G)$ and $v \in G$
\begin{align*}
\Phi(A,x,z) &=\blue{\min_{p:\ip{p}{d} = 1} \max_{s\in S(x), a \in A} \ip{p}{s - a}} \\
&\ge  \min_{p:\ip{p}{d} = 1} \max_{s\in S(x), a \in F\cap A} \ip{p}{s - a} \\
&= \|u-v\| \\
&\ge \dist(G,\conv(A\setminus G)).
\end{align*}
Since this holds for all $x\in \Delta_{n-1}, z\in Z$, we conclude that $$\Phi(A,Z) \ge \dmin_{G\in\faces(F)} \dist(G,\conv(A\setminus G)).$$
\qed
\end{proof}

It is evident from Theorem~\ref{main.thm} and Theorem~\ref{prop.lower.bound} that $\Phi(A,Z)$ can be arbitrarily larger than $\Phi(A)$. \blue{In particular, consider 
$A = \matr{M & 0 & 0 & \cdots &0 \\ \frac{1}{2} & \frac{1}{2} & \frac{1}{3} &\cdots & \frac{1}{n}} \in \R^{2 \times n}$ with $M \gg 1$, and $Z=\{e_1\}\subseteq \Delta_{n-1}$.  For this  $A$ and $Z$ we have 
$$
\Phi(A,Z) \ge M \gg \frac{1}{n}-\frac{1}{n-1} =\dmin_{F\in\faces(\conv(A))\atop \emptyset \subsetneq F \subsetneq\conv(A)}\dist(F,\conv(A\setminus F))
$$
because in this case $\{a_1\} = \{Ae_1\} =AZ$ is the smallest face of $\conv(A)$ containing $AZ$ and the minimum in the right hand side is attained at the face $F=\{a_n\}$ of $\conv(A)$.}

\section{Frank Wolfe algorithm with away steps}
\label{sec.FrankWolfe}
We next present a fairly generic linear convergence result for a version of the Frank Wolfe algorithm with away steps.  We emphasize that the  statement in Theorem~\ref{thm.fw.linear} can be found in or inferred from results already shown in \cite{BeckS15,LacoJ15}. 
The goal of this section is to highlight three central premises that yield the proof of linear convergence, namely a first premise in the form of a {\em slope bound}, a second premise in the form of a {\em decrease condition}, and a third premise on the {\em search direction} selected by the algorithm.  As we detail below, the third premise is naturally tied to the condition measures $\Phi(A)$ and $\Phi(A,Z)$ discussed in Section~\ref{sec.main}.

Assume $A =\matr{a_1 & \cdots & a_n}\in \R^{m\times n}$ and consider the problem
\begin{equation}\label{min.problem}
\min_{\blue{u}\in \text{conv}(A)} f(\blue{u}) 
\end{equation}
where $f:\conv(A) \rightarrow \R$ is a differentiable convex function.  \blue{Throughout this section we assume that at least two columns of $A$ are different as otherwise problem~\eqref{min.problem} is trivial.}

\medskip

We will rely on the following notation related to problem~\eqref{min.problem} throughout the rest of the paper.  Let $f^*, \blue{U^*}, Z^*$ be defined as
\[
f^* := \dmin_{\blue{u}\in \conv(A)} f(\blue{u}), \; \blue{U^*}:=\Argmin_{\blue{u}\in \conv(A)} f(\blue{u}), \; Z^*:=\{z\in \Delta_{n-1}: Az\in \blue{U^*}\}.
\]

\begin{algorithm}
  \caption{Frank-Wolfe Algorithm with Away Steps
    \label{alg:frankwolfe}}
  \begin{algorithmic}[1]
\State Pick $x_0 \in \Delta_{n-1}$; put $\blue{u}_0:= Ax_0;\; k:=0$
 \For{$k=0,1,2,\dots$}
\State $j := \displaystyle\argmin_{i=1,\dots,n} \ip{\nabla f(\blue{u}_k)}{a_i};\; \ell := \displaystyle\argmax_{i\in I(x_k)}  \ip{\nabla f(\blue{u}_k)}{a_i}$
 \If{$\ip{\nabla f(\blue{u}_k)}{a_j-\blue{u}_k} < \ip{\nabla f(\blue{u}_k)}{\blue{u}_k-a_{\ell}}$ \blue{or $\vert I(x_k)\vert =1$}} 
\State $v:=a_j - \blue{u}_k;\; \blue{w}:=e_j - x_k; \; \blue{\gamma}_{\max} := 1\;\;\;\;\,$  \quad (regular step)
\quad \Else 
$\;\;$
\State  $v:= \blue{u}_k - a_\ell; \; \blue{w}:= x_k -e_{\ell}; \; \blue{\gamma}_{\max} := \frac{\ip{e_\ell}{x_k}}{1-\ip{e_\ell}{x_k}}\;\;$  (away step) 
\EndIf 
\State  choose $\blue{\gamma}_k \in [0,\blue{\gamma}_{\max}]$ 
\State $x_{k+1} := x_k + \blue{\gamma}_k \blue{w}; \; \; \blue{u}_{k+1} := \blue{u}_k + \blue{\gamma}_k v = Ax_{k+1}$ 
\EndFor
  \end{algorithmic}
\end{algorithm}

Theorem~\ref{thm.fw.linear} gives a fairly generic linear convergence result for Algorithm~\ref{alg:frankwolfe}.  This result hinges on the following three premises.

\medskip

\noindent{\bf Premise 1:}
{\em  There exists $\mu > 0$ such that the objective function $f$ satisfies the following  bound: For all  $\blue{u}\in \conv(A)$ and $\blue{u^*} \in \blue{U^*}$ \[
\blue{\ip{\nabla f(\blue{u})}{\blue{u}-\blue{u^*}} \ge \|\blue{u}-\blue{u^*}\|\sqrt{2\mu(f(\blue{u}) - f^*)}.}
\]
}

Premise 1 readily holds if $f$ is strongly convex with parameter $\mu$.  In this case  the optimality of $u^*$, the strong convexity of $f$, and the arithmetic-geometric inequality imply
\begin{align*}
\ip{\nabla f(\blue{u})}{\blue{u}-\blue{u}^*} &\ge f(y) - f^* +\frac{\mu}{2}\|\blue{u}-\blue{u}^*\|^2\ge \|\blue{u}-\blue{u}^*\| \sqrt{2\mu(f(\blue{u}) - f^*)}.
\end{align*}
From the error bound of Beck and Shtern~\cite[Lemma 2.5]{BeckS15}, it follows  that Premise 1 also holds in the more general case when $f(u) = \blue{h}(Eu) + \ip{b}{u}$ for some strongly convex function $\blue{h}$.

\medskip

\noindent{\bf Premise 2:} {\em There exists $L>0$ such that \blue{for all $\gamma\in [0,\gamma_{\max}]$
\[
f(u_k + \gamma v) \le f(u_k) + \ip{\nabla f(\blue{u}_k)}{\gamma v} + \frac{L\gamma^2\|v\|^2}{2},
\]
and the steplength $\blue{\gamma}_k$ at each iteration of Algorithm~\ref{alg:frankwolfe} satisfies 
\[
\blue{\gamma}_k = \argmin_{\blue{\gamma} \in [0,\blue{\gamma}_{\max}]} \left\{\ip{\nabla f(\blue{u}_k)}{\gamma v} + \frac{L\gamma^2\|v\|^2}{2}
\right\} = \min\left\{-\frac{\ip{\nabla f(\blue{u}_k)}{v}}{L\|v\|^2},\blue{\gamma}_{\max}\right\}.
\]}}
Premise  2  holds if $\nabla f$ is Lipschitz with constant $L$.

\medskip

\noindent{\bf Premise 3:} {\em There exists $c>0$ such that the search direction $v$ 
selected in Step 5 or Step 7 of Algorithm~\ref{alg:frankwolfe} satisfies 
\[
-\frac{\ip{\nabla f(\blue{u}_k)}{v}}{\ip{\nabla f(\blue{u}_k)}{d}} \ge c
\]
for all $d = \blue{\frac{u_k-u^*}{\|u_k-u^*\|}}$ such that $\blue{u}^*\in U^*$ and $\ip{\nabla f(\blue{u}_k)}{d} > 0.$
}

\medskip

Premise 3 holds at each iteration of the Frank-Wolfe Algorithm with Away Steps for $c = \frac{\Phi(A)}{2}$ as well as for the sharper bound $c = \frac{\Phi(A,Z^*)}{2}.$ To see this, observe that \blue{as noted by \cite{LacoJ15}}, the choice between regular or away steps ensures 
\[
-\ip{\nabla f(\blue{u}_k)}{v} \ge \frac{1}{2} \ip{\nabla f(\blue{u}_k)}{a_\ell-a_j} > 0.
\]
Let $z\in Z^*$ be such that $Az = \blue{u}^* \in \blue{U^*}.$  \blue{Take $d = \frac{A(x_k-z)}{\|A(x_k-z)\|}=\frac{\blue{u}_k-u^*}{\|\blue{u}_k-u^*\|}$ and  $p = \frac{\nabla f(\blue{u}_k)}{\ip{\nabla f(\blue{u}_k)}{d}}.$  Observe that 
$\ip{p}{d} =1$ and thus from the construction of $\Phi(A,x,z)$ we get}
\[
\frac{\ip{\nabla f(\blue{u}_k)}{ a_\ell - a_j}}{\ip{\nabla f(\blue{u}_k)}{d}} = \dmax_{\ell\in I(x_k),a\in A } \ip{p}{a_\ell - a} \ge \Phi(A,x_k,z) \ge \Phi(A,Z^*).
\]
Hence \[
-\frac{\ip{\nabla f(\blue{u}_k)}{v}}{\ip{\nabla f(\blue{u}_k)}{d}} \ge \frac{\ip{\nabla f(\blue{u}_k)}{a_\ell-a_j}}{2\ip{\nabla f(\blue{u}_k)}{d}} \ge \frac{\Phi(A,Z^*)}{2} \ge \frac{\Phi(A)}{2}.
\]
\blue{We note that a similar, though a bit weaker, bound for $c$ is given in \cite{BeckS15} in terms of the vertex-facet distance of $\conv(A)$. (See \cite[Corollary 3.1]{BeckS15}.)}

\begin{theorem}\label{thm.fw.linear} Assume $x_0\in\Delta_{n-1}$ in Step 1 of Algorithm~\ref{alg:frankwolfe} is a vertex of $\Delta_{n-1}$.  If Premise 1, Premise 2, and Premise 3 hold then the sequence of points $\{\blue{u}_k: k =0,1,\dots\}$ generated by Algorithm~\ref{alg:frankwolfe} satisfies
\begin{equation}\label{lin.conv}
f(\blue{u}_k) - f^* \le \left(1-r\right)^{k/2} (f(\blue{u}_0) - f^*)
\end{equation}
for $r = \blue{\min\left\{\frac{\mu c^2}{L\cdot\diam(A)^2},\frac{1}{2}\right\}}$.  In particular, \blue{if $\mu \le L$ then }\eqref{lin.conv} holds for the solution-independent rate
$$r =  \frac{\mu}{L} \cdot \frac{\Phi(A)^2}{4\diam(A)^2}$$
as well as for the sharper, though solution-dependent rate
$$r =  \frac{\mu}{L} \cdot \frac{\Phi(A,Z^*)^2}{4\diam(A)^2}.$$
\end{theorem}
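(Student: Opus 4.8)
The plan is to run the standard "three-premise" telescoping argument, tracking the decrease $f(u_k)-f(u_{k+1})$ at each iteration and showing it is a definite fraction of the current gap $f(u_k)-f^*$. First I would combine Premise~2 with the optimal choice of $\gamma_k$: plugging the closed form $\gamma_k=\min\{-\ip{\nabla f(u_k)}{v}/(L\|v\|^2),\gamma_{\max}\}$ into the quadratic upper model yields, in the "good" case $\gamma_k<\gamma_{\max}$, the decrease $f(u_k)-f(u_{k+1})\ge \frac{\ip{\nabla f(u_k)}{v}^2}{2L\|v\|^2}$, and in the "bad" case $\gamma_k=\gamma_{\max}$ the decrease is at least $\gamma_{\max}\cdot(-\ip{\nabla f(u_k)}{v})/2$ (since the model is decreasing on $[0,\gamma_{\max}]$ there). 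I would also invoke the standard fact (as in~\cite{LacoJ15,BeckS15}) that a bad (maximal away) step cannot occur more than half the time, because each such step drops an atom from the active set $I(x_k)$ while each step adds at most one; hence after $k$ iterations at least $k/2$ of them are good steps, which is why the exponent in~\eqref{lin.conv} is $k/2$ rather than $k$.

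Next I would bound the good-step decrease from below in terms of the gap. Let $u^*\in U^*$ realize $\dist(u_k,U^*)$ and set $d=(u_k-u^*)/\|u_k-u^*\|$. If $\ip{\nabla f(u_k)}{d}\le 0$ then by convexity $f(u_k)-f^*\le \ip{\nabla f(u_k)}{u_k-u^*}\le 0$, so $u_k$ is already optimal and there is nothing to prove; otherwise Premise~3 gives $-\ip{\nabla f(u_k)}{v}\ge c\,\ip{\nabla f(u_k)}{d}$, and Premise~1 gives $\ip{\nabla f(u_k)}{d}=\ip{\nabla f(u_k)}{u_k-u^*}/\|u_k-u^*\|\ge \sqrt{2\mu(f(u_k)-f^*)}$. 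Using $\|v\|\le\diam(A)$ (both candidate directions are differences of points in $\conv(A)$), the good-step bound becomes $f(u_k)-f(u_{k+1})\ge \frac{c^2\cdot 2\mu(f(u_k)-f^*)}{2L\,\diam(A)^2}=\frac{\mu c^2}{L\,\diam(A)^2}(f(u_k)-f^*)$. I also need the bad-step bound not to \emph{increase} the gap — monotonicity $f(u_{k+1})\le f(u_k)$ holds on every step since $\gamma=0$ is feasible in the model — and a uniform per-step estimate to handle the $k/2$ counting cleanly; the cleanest route, following~\cite{LacoJ15}, is to show every step (good or bad) satisfies $f(u_k)-f(u_{k+1})\ge r\,(f(u_k)-f^*)$ with $r=\min\{\mu c^2/(L\,\diam(A)^2),\tfrac12\}$, where the $\tfrac12$ absorbs the bad/away steps (an away step that is maximal removes an atom and the model decrease is controlled, but one only gets the weaker universal factor there). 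Then among any block one gets the half-geometric rate: $f(u_{2j})-f^*\le(1-r)^j(f(u_0)-f^*)$, which rearranges to~\eqref{lin.conv}.

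Finally the two displayed special rates follow by substitution: Premise~3 was verified earlier in the section to hold with $c=\Phi(A,Z^*)/2\ge\Phi(A)/2$, so plugging $c=\Phi(A,Z^*)/2$ (resp. $c=\Phi(A)/2$) into $r=\mu c^2/(L\,\diam(A)^2)$ gives $r=\tfrac{\mu}{L}\cdot\tfrac{\Phi(A,Z^*)^2}{4\diam(A)^2}$ (resp. with $\Phi(A)$); and when $\mu\le L$ this quantity is at most $\tfrac14<\tfrac12$ (using $\Phi(A)\le\diam(A)$, which is immediate from the minimum-distance characterization in Theorem~\ref{main.thm}), so the $\min$ with $\tfrac12$ is inactive and the clean closed forms hold. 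The main obstacle I expect is the bookkeeping around the away/bad steps — getting the decrease lower bound to survive in the maximal-away case and making the "at least half the steps are good" counting interact correctly with the telescoping so that the exponent is exactly $k/2$ and the constant is exactly $r$ as stated; the requirement that $x_0$ be a vertex of $\Delta_{n-1}$ is what starts the active-set counting off correctly (initial support of size $1$). Everything else is the routine quadratic-model manipulation and two applications each of Premises~1 and~3.
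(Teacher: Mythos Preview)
Your overall three-premise outline is correct and matches the paper's approach, but the bookkeeping around the ``bad'' steps has a genuine gap. You treat $\gamma_k=\gamma_{\max}$ as a single case and then assert that ``every step (good or bad) satisfies $f(u_k)-f(u_{k+1})\ge r\,(f(u_k)-f^*)$,'' attributing the $\tfrac12$ in $r$ to the maximal away steps that drop an atom. That is not how the $\tfrac12$ arises. The case $\gamma_k=\gamma_{\max}$ must be split further into $\gamma_{\max}\ge 1$ and $\gamma_{\max}<1$. When $\gamma_{\max}\ge 1$, your own model bound $f(u_{k+1})\le f(u_k)+\tfrac{\gamma_{\max}}{2}\ip{\nabla f(u_k)}{v}$ together with $\gamma_{\max}\ge 1$ and $\ip{\nabla f(u_k)}{v}\le \min_{u\in\conv(A)}\ip{\nabla f(u_k)}{u-u_k}\le f^*-f(u_k)$ (choice of $v$ plus convexity) gives $f(u_{k+1})-f^*\le \tfrac12(f(u_k)-f^*)$; \emph{this} is the source of the $\tfrac12$ in $r$. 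When $\gamma_{\max}<1$ (necessarily an away step, and precisely the drop step), $\gamma_{\max}$ can be arbitrarily small and no multiplicative contraction is available---one gets only the monotonicity $f(u_{k+1})\le f(u_k)$.

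So the correct structure is three cases, not two: Case~1 ($\gamma_k<\gamma_{\max}$) and Case~2 ($\gamma_k=\gamma_{\max}\ge 1$) each contract by $(1-r)$, while Case~3 ($\gamma_k=\gamma_{\max}<1$) gives only $f(u_{k+1})\le f(u_k)$. The active-set counting argument (which you state correctly, using $|I(x_0)|=1$) then shows Case~3 occurs at most $N/2$ times out of $N$, so at least $N/2$ iterations contract, yielding the exponent $k/2$. Your phrasing that the $\tfrac12$ ``absorbs the bad/away steps'' conflates Cases~2 and~3 and is internally inconsistent with the rest of your sketch: if every step contracted by $(1-r)$ the exponent would be $k$, not $k/2$. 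The remaining pieces of your proposal---the good-step analysis via Premises~1--3, the substitution $c=\Phi(A)/2$ or $c=\Phi(A,Z^*)/2$, and the check that the $\min$ with $\tfrac12$ is inactive when $\mu\le L$---are all fine.
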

\begin{proof}  This proof is an adaptation of the proofs in \cite{BeckS15,LacoJ15,PenaRS15}. \blue{We consider the following three possible cases separately: $\gamma_k < \gamma_{\max}, \gamma_k = \gamma_{\max}\ge 1$, and $\gamma_k = \gamma_{\max} < 1$.  }

\noindent
\blue{{\bf Case 1:} $\blue{\gamma}_k < \blue{\gamma}_{\max}$.} Premise 2 and the choice of $v$ imply that
\begin{equation}\label{eq.lin.conv.1}
f(\blue{u}_{k+1}) \le f(\blue{u}_k) - \frac{\ip{\nabla f(\blue{u}_k)}{v}^2}{2L\|v\|^2}.
\end{equation}
Premise 3 and Premise 1 in turn yield
\[
\ip{\nabla f(\blue{u}_k)}{v}^2 \ge \frac{c^2\ip{\nabla f(\blue{u}_k)}{\blue{u}_k-\blue{u}^*}^2}{\|\blue{u}_k-\blue{u}^*\|^2} \\
\ge 2\mu c^2(f(\blue{u}_k)-f^*).
\]
Plugging the last inequality into \eqref{eq.lin.conv.1} we get
\begin{align*}
f(\blue{u}_{k+1}) - f^* &\le \left(1 - \frac{\mu c^2 }{L\|v\|^2}\right) (f(\blue{u}_k) - f^*) \\
&\le\left(1 - \frac{\mu c^2 }{L\cdot\diam(A)^2}\right) (f(\blue{u}_k) - f^*).
\end{align*}

\noindent
\blue{{\bf Case 2:} $\gamma_k = \gamma_{\max} \ge 1$.   Premise 2 and the choice of $v$ imply that
\[
f(u_{k+1}) = f(u_k+\gamma_{\max}v) \le f(u_k) + \frac{\gamma_{\max}}{2}\ip{\nabla f(u_k)}{v} \le f(u_k) + \frac{1}{2}\ip{\nabla f(u_k)}{v}.
\]
On the other hand, the choice of $v$ and the convexity of $f$ yield 
$$\ip{\nabla f(u_k)}{v} \le \min_{u\in\conv(A)} \ip{\nabla f(u_k)}{u-u_k} \le \ip{\nabla f(u_k)}{u^*-u_k} \le f^* - f(u_k).$$
Consequently, when $\gamma_k = \gamma_{\max} \ge 1$ we have
\[
f(u_{k+1}) - f^* \le f(u_k)-f^* + \frac{1}{2}(f^*-f(u_k))= \frac{1}{2} (f(u_k)-f^*).
\]

\noindent
\blue{{\bf Case 3:} $\blue{\gamma}_k = \blue{\gamma}_{\max}< 1.$}  Premise 2 and the choice of $v$ imply that
\[
f(u_{k+1}) = f(u_k+\gamma_{\max}v) \le f(u_k) + \frac{\gamma_{\max}}{2}\ip{\nabla f(u_k)}{v} \le f(u_k).
\]
Hence  $f(\blue{u}_{k+1}) - f^* \le  f(\blue{u}_{k}) - f^*.$
}

\medskip

\blue{To finish, it suffices to show that after $N$ iterations, Case 3 occurs at most $N/2$ times. }
To that end, we apply the following clever argument from \cite{LacoJ15}.
 Each time $\blue{\gamma}_k = \blue{\gamma}_{\max}< 1$ we have $\vert I(x_{k+1})\vert  \le \vert I(x_k)\vert -1$.  On the other hand, each time $\blue{\gamma}_k < \blue{\gamma}_{\max}$ we have $\vert I(x_{k+1})\vert \le \vert I(x_k) \vert + 1$ and each time $\blue{\gamma}_k = \blue{\gamma}_{\max}\ge 1$ we have $\vert I(x_{k+1})\vert \le \vert I(x_{k})\vert.$ Since $\vert I(x_0)\vert  = 1$ and $\vert I(x_k)\vert  \ge 1$ for all $x_k\in\Delta_{n-1}$, it follows that after $N$ iterations there must have been at least as many iterations with $\blue{\gamma}_k < \blue{\gamma}_{\max}$
as there were  with $\blue{\gamma}_k = \blue{\gamma}_{\max}<1$.  In particular, the number of iterations with $\blue{\gamma}_k = \blue{\gamma}_{\max}<1$ is at most $N/2$.  
\qed
\end{proof}

\bigskip

As we noted at the end of Section~\ref{sec.main}, the quantity $\Phi(A,Z^*)$  in Theorem~\ref{thm.fw.linear} can be arbitrarily better (larger) than $\Phi(A).$   Observe that the solution-independent bound is simply the most conservative solution-dependent one for all possible solution sets $Z^*\subseteq \Delta_{n-1}$.  

\blue{The linear convergence rate \eqref{lin.conv} with $r = \frac{\mu}{L}\cdot\frac{\Phi(A)^2}{4\diam(A)^2}$  matches the one given in~\cite[Theorem 1]{LacoJ15} when $f$ is strongly convex with Lipschitz gradient.  Furthermore, the rate \eqref{lin.conv} in terms of Premises 1 through 3 is essentially equivalent to the one derived~\cite[Theorem 3.1]{BeckS15}.  A subtle difference is that the latter uses a weaker bound for Premise 3 in terms of the vertex-facet distance~\cite[Corollary 3.1]{BeckS15}.}


\bigskip

\section{Convex quadratic objective}
\label{sec.quadratic}
We next  \blue{establish a} linear convergence result \blue{similar to} Theorem~\ref{thm.fw.linear} for the case when the objective function is of the form \[
f(\blue{u}) = \frac{1}{2} \ip{\blue{u}}{Q \blue{u}} + \ip{b}{\blue{u}},\]
for an $m\times m$ symmetric positive semidefinite matrix $Q$ and $b\in \R^m$. Consider  problem~\eqref{min.problem} and Algorithm~\ref{alg:frankwolfe} for this objective function.  In this case the steplength $\blue{\gamma}_k$ in Step 9 of Algorithm~\ref{alg:frankwolfe} can be easily computed via the following exact line-search:
\begin{align}\label{eq.step.length}
\blue{\gamma}_k &:= \argmin_{\blue{\gamma} \in [0,\blue{\gamma}_{\max}]} f(\blue{u}_k + \blue{\gamma} v) \notag\\
&= \left\{ 
\begin{array}{ll} \min\left\{\blue{\gamma}_{\max}, - \frac{\ip{Q\blue{u}_k + b}{v}}{\ip{v}{Q v}} \right\} & \text{ if } \; \ip{v}{Q v}>0\\ \blue{\gamma}_{\max} & \text{ if } \; \ip{v}{Q v}=0. \end{array} \right.
\end{align}
In this case linear convergence can be obtained from Theorem~\ref{thm.fw.linear} and the error bound~\cite[Lemma 2.5]{BeckS15}.  However, the  more explicit rate of convergence \blue{stated in Theorem~\ref{thm.quad.conv} below} can be obtained via a refinement of the proof of Theorem~\ref{thm.fw.linear}.  

\blue{
Theorem~\ref{thm.quad.conv} is similar in spirit to the main result of Beck and Teboulle~\cite{BeckT04}. However, the treatment in~\cite{BeckT04} is concerned with the regular Frank-Wolfe algorithm for minimizing a quadratic objective function of the form $\|Mu - b\|^2$ over a compact convex domain. Their main linear convergence result in \cite{BeckT04}, namely~\cite[Proposition 3.2]{BeckT04}, requires that the minimizer is in the relative interior of the domain and that its objective value is zero.  By contrast, Theorem~\ref{thm.quad.conv} below does not require any assumption about the location of the minimizer or its objective value and applies to  Algorithm~\ref{alg:frankwolfe}, that is, the variant of Frank-Wolfe algorithm with away steps.
}


\medskip

\blue{
The statement and proof of Theorem~\ref{thm.quad.conv} are modifications of those of Theorem~\ref{thm.fw.linear}.  The linear convergence rate in Theorem~\ref{thm.quad.conv} is stated in terms of a variant $\bar \Phi_g$ of $\Phi$.  Similar to the proof of Theorem~\ref{thm.fw.linear}, the proof Theorem~\ref{thm.quad.conv} follows by putting together variants of the three premises described in Section~\ref{sec.FrankWolfe}.

The construction of the variant $\bar \Phi_g$ of $\Phi$ relies on the objects $Z(g)$ and $\|\cdot\|_g$ defined next.  Assume $ \bar A \in \R^{(m+1)\times n}$.  For $g \in \R^m$ define $Z(g)\subseteq \Delta_{n-1}$ 
 as follows 
\[
Z(g) := \Argmin_{z\in \Delta_{n-1}} \ip{\matr{g\\1}}{\bar Az}.
\]
For $\bar v = \matr{v \\ v_{m+1}} \in \R^{m+1}$ let
\[
\|\bar v\|_g = \sqrt{\left\|\matr{I_m & 0}\bar v\right\|^2 + \left\vert\matr{g\transp & 1} \bar v \right\vert } = \sqrt{\|v\|^2 + \vert g\transp v + v_{m+1}\vert}.
\]
Observe that $ \|\bar v\|_g > 0$ if $\bar v \ne 0$.

For $x\in \Delta_{n-1}$ and $z \in Z(g)$ with $\bar A(x-z) \ne 0$ let $\bar d:=\frac{\bar A (x-z)}{\|\bar A(x-z)\|_g}$ and define \begin{equation}\label{eq.def.bar.phi}
\bar \Phi_g\left(\bar A,x,z\right) = 
\min_{p: \ip{p}{\bar d} = 1} \max\left\{\ip{p}{\bar a_\ell - \bar a_j}: \ell \in I(x),j\in\{1,\dots,n\}\right\}
\end{equation}
and
\[
\bar \Phi_g\left(\bar A\right) := \min_{x\in \Delta_{n-1},z\in Z(g)\atop \bar A(x-z) \ne 0} \bar \Phi_g\left(\bar A,x,z\right).
\]
The construction of $\bar \Phi_g$ in \eqref{eq.def.bar.phi} resembles that of $\Phi$ in \eqref{eq.def.phi}.  The key difference is the type of normalization used in $\bar d$ that discriminates between the first $m$ and the last components of $\bar A(x-z)$ via $g$.  
As Proposition~\ref{prop.bar.phi} below shows, $\bar \Phi_g$ can be bounded below in terms of $\Phi$.  In particular, Proposition~\ref{prop.bar.phi} shows that $\bar \Phi_g(\bar A) > 0$ if at least two columns of $\bar A$ are different.

}

\begin{theorem}\label{thm.quad.conv} 
Assume the objective function $f(\blue{u})$ in problem~\eqref{min.problem} is $f(\blue{u}) = \frac{1}{2}\ip{\blue{u}}{Q\blue{u}} + \ip{b}{\blue{u}}$ where $Q$ is an $m \times m$ symmetric positive semidefinite matrix and $b\in\R^m$.  
\blue{Assume the matrix $\bar A = \matr{Q^{1/2}A\\b\transp A}$  has at least two different columns as otherwise problem~\eqref{min.problem} is trivial.

If $x_0\in\Delta_{n-1}$ in Step 1 of Algorithm~\ref{alg:frankwolfe} is a vertex of $\Delta_{n-1}$ and the steplength $\blue{\gamma}_k$ in Step 9 of Algorithm~\ref{alg:frankwolfe} is computed as in~\eqref{eq.step.length} then the convergence rate \eqref{lin.conv} holds with
\[
r = \min\left\{\frac{\bar \Phi_{g}\left(\bar A\right)^2}{8\diam(Q^{1/2}A)^2},\frac{1}{2}\right\},
\]
where 
$g = Q^{1/2}Az$ for $z\in Z^*$, which does not depend on the specific choice of  $z\in Z^*$.
}
\end{theorem}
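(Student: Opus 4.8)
The plan is to transfer the three-premise argument behind Theorem~\ref{thm.fw.linear} to the lifted polytope $\conv(\bar A)$, with distances there measured by the twisted norm $\|\cdot\|_g$ instead of the Euclidean norm. Write $\nabla f(u) = Qu + b$ and, for $x\in\Delta_{n-1}$ with $u = Ax$, set $\bar u := \bar A x = \matr{Q^{1/2}u \\ b\transp u}$, so that $f(u) = \tfrac12\|Q^{1/2}u\|^2 + b\transp u$. First I would settle the optimal set: since $f$ is a convex quadratic, its restriction to the segment joining any two minimizers is a constant convex quadratic, forcing $\|Q^{1/2}(u_1^*-u_2^*)\| = 0$; hence $g = Q^{1/2}Az$ and even $\bar u^* := \bar A z$ are the same for every $z\in Z^*$, and $b\transp Az$ is determined as well. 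Using $g = Q^{1/2}u^*$ one checks $\ip{\matr{g\\1}}{\bar A z} = \ip{\nabla f(u^*)}{Az}$, so $Z(g)$ consists of exactly those $z$ with $Az$ minimizing $\ip{\nabla f(u^*)}{\,\cdot\,}$ over $\conv(A)$; by the first-order optimality of $u^*$ this set contains $Z^*$, whence $Z^*\subseteq Z(g)$ and $\bar\Phi_g(\bar A)\le \bar\Phi_g(\bar A,x,z)$ for every $z\in Z^*$ and every $x$ with $\bar A(x-z)\ne 0$.

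The role of Premise~1 is played by two identities. Fix $z\in Z^*$, put $u^* = Az$, and expand $\|\bar A(x_k-z)\|_g^2$ using $g = Q^{1/2}u^*$ to get
\[
\|\bar A(x_k-z)\|_g^2 = \|Q^{1/2}(u_k-u^*)\|^2 + \ip{\nabla f(u^*)}{u_k-u^*},
\]
where the absolute value inside $\|\cdot\|_g$ is redundant because $\ip{\nabla f(u^*)}{u_k-u^*}\ge 0$ by optimality of $u^*$. Combining this with the exact quadratic expansion $f(u_k)-f^* = \ip{\nabla f(u^*)}{u_k-u^*} + \tfrac12\|Q^{1/2}(u_k-u^*)\|^2$ yields both
\[
\ip{\nabla f(u_k)}{u_k-u^*} = \ip{\nabla f(u^*)}{u_k-u^*} + \|Q^{1/2}(u_k-u^*)\|^2 = \|\bar A(x_k-z)\|_g^2
\]
and the slope-type bound $\|\bar A(x_k-z)\|_g^2 \ge f(u_k)-f^*$. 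If $\bar A(x_k-z) = 0$ then $f(u_k) = f^*$, and since the exact line-search step~\eqref{eq.step.length} never increases $f$ the bound~\eqref{lin.conv} then holds trivially; so assume $\bar A(x_k-z)\ne 0$.

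The substitute for Premise~3 is the crux. Keep $z\in Z^*$ fixed, set $\bar d := \bar A(x_k-z)/\|\bar A(x_k-z)\|_g$, and take
\[
\bar p := \frac{\|\bar A(x_k-z)\|_g}{\ip{\nabla f(u_k)}{u_k-u^*}}\matr{Q^{1/2}u_k \\ 1}.
\]
A direct computation gives $\ip{\bar p}{\bar a_\ell - \bar a_j} = \frac{\|\bar A(x_k-z)\|_g}{\ip{\nabla f(u_k)}{u_k-u^*}}\,\ip{\nabla f(u_k)}{a_\ell - a_j}$ for all $\ell,j$, and $\ip{\bar p}{\bar d} = 1$ by the first identity above. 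Letting $\ell,j$ be the indices chosen in Step~3 of Algorithm~\ref{alg:frankwolfe}, the definition~\eqref{eq.def.bar.phi} of $\bar\Phi_g$ gives
\[
\bar\Phi_g(\bar A)\;\le\;\bar\Phi_g(\bar A,x_k,z)\;\le\;\ip{\bar p}{\bar a_\ell - \bar a_j}\;=\;\frac{\ip{\nabla f(u_k)}{a_\ell - a_j}}{\|\bar A(x_k-z)\|_g},
\]
the last equality using $\ip{\nabla f(u_k)}{u_k-u^*} = \|\bar A(x_k-z)\|_g^2$. Combined with the standard consequence of the regular/away selection rule, $-\ip{\nabla f(u_k)}{v}\ge \tfrac12\ip{\nabla f(u_k)}{a_\ell - a_j}$, this gives $-\ip{\nabla f(u_k)}{v}\ge \tfrac12\bar\Phi_g(\bar A)\,\|\bar A(x_k-z)\|_g$, and then, by the slope bound, $\ip{\nabla f(u_k)}{v}^2 \ge \tfrac14\bar\Phi_g(\bar A)^2(f(u_k)-f^*)$.

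Finally I would run the three-case analysis of the proof of Theorem~\ref{thm.fw.linear}, the decrease now coming from the exact line search~\eqref{eq.step.length}. When $\gamma_k < \gamma_{\max}$, \eqref{eq.step.length} forces $\ip{v}{Qv} > 0$ and $f(u_{k+1}) = f(u_k) - \ip{\nabla f(u_k)}{v}^2/(2\ip{v}{Qv})$; since $v$ is a difference of two points of $\conv(A)$, $\ip{v}{Qv} = \|Q^{1/2}v\|^2 \le \diam(Q^{1/2}A)^2$, so the previous paragraph yields $f(u_{k+1})-f^* \le \bigl(1 - \bar\Phi_g(\bar A)^2/(8\diam(Q^{1/2}A)^2)\bigr)(f(u_k)-f^*)$. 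When $\gamma_k = \gamma_{\max}$ one obtains $f(u_{k+1}) \le f(u_k) + \tfrac{\gamma_{\max}}{2}\ip{\nabla f(u_k)}{v}$; since the selection rule and convexity give $\ip{\nabla f(u_k)}{v} \le \min_{u\in\conv(A)}\ip{\nabla f(u_k)}{u-u_k} \le f^* - f(u_k) \le 0$, this implies $f(u_{k+1})-f^* \le \tfrac12(f(u_k)-f^*)$ if moreover $\gamma_{\max}\ge 1$, and merely $f(u_{k+1})\le f(u_k)$ if $\gamma_{\max} < 1$. The dimension-counting argument of~\cite{LacoJ15} (as in the proof of Theorem~\ref{thm.fw.linear}) shows the last, non-contracting case occurs at most $N/2$ times among $N$ iterations, which delivers~\eqref{lin.conv} with $r = \min\{\bar\Phi_g(\bar A)^2/(8\diam(Q^{1/2}A)^2),\tfrac12\}$, positive by Proposition~\ref{prop.bar.phi}. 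I expect the main obstacle to be the Premise~3 step: arranging the twisted norm $\|\cdot\|_g$ and the lifted vector $\bar p \propto \matr{Q^{1/2}u_k \\ 1}$ so that $\ip{\bar p}{\bar d} = 1$ while $\ip{\bar p}{\bar a_\ell - \bar a_j}$ reproduces $\ip{\nabla f(u_k)}{a_\ell - a_j}$ up to precisely the scalar that cancels via the identity $\ip{\nabla f(u_k)}{u_k-u^*} = \|\bar A(x_k-z)\|_g^2$.
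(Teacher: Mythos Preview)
Your proposal is correct and follows essentially the same approach as the paper: you establish the same identity $\ip{\nabla f(u_k)}{u_k-u^*} = \|\bar A(x_k-z)\|_g^2$, use the same lifted vector $\bar p \propto \matr{Q^{1/2}u_k\\1}$ to instantiate the $\bar\Phi_g$ bound, and close with the identical three-case analysis and support-counting argument. The only cosmetic difference is that you package Premise~1' directly as the pair of facts $\ip{\nabla f(u_k)}{u_k-u^*} = \|\bar A(x_k-z)\|_g^2 \ge f(u_k)-f^*$ rather than as a single square-root inequality, which is an equivalent reformulation.
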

\blue{
\begin{proof}
This is a modification of the proof of Theorem~\ref{thm.fw.linear}.  Let $u^* = Az \in U^*$ for $z\in Z^*$ and let $\blue{u}_k = Ax_k$ denote the $k$-th iterate generated by  Algorithm~\ref{alg:frankwolfe}.  The proof is a consequence of $Z^*\subseteq Z(g)$ and the following three premises that hold provided $\blue{u}_k \ne u^*$.

\bigskip

\noindent
{\bf Premise 1':} 
\[
\ip{\nabla f(\blue{u}_k)}{\blue{u}_k-u^*} \ge \|\bar A(x_k-z)\|_g\sqrt{f(\blue{u}_k) - f^*}.
\]

\bigskip

\noindent
{\bf Premise 2':} If $\gamma_k =- \frac{\ip{\nabla f(u_k)}{v}}{\ip{v}{Q v}}=- \frac{\ip{Q\blue{u}_k + b}{v}}{\ip{v}{Q v}} < \blue{\gamma}_{\max}$ then
\[
f(\blue{u}_k)-f(\blue{u}_{k+1}) = \frac{\ip{\nabla f(\blue{u}_k)}{v}^2}{2\ip{v}{Q v}}.
\]

\bigskip

\noindent
{\bf Premise 3':} 
\[
-\ip{\nabla f (u_k)}{v} \ge
\frac{\dmax_{\ell\in I(x_k),j\in\{1,\dots,n\}} \ip{\nabla f (u_k)}{a_\ell - a_j}}{
2} \ge \frac{\bar\Phi_g(\bar A)}{2} \cdot\frac{\ip{\nabla f(u_k)}{u_k-u^*}}{\|\bar A(x_k-z)\|_g}.
\]

\bigskip

Indeed, by putting together Premise 2', Premise 3', and Premise 1' it follows that when $\blue{\gamma}_k =- \frac{\ip{v}{Qu_k + b}}
{\ip{v}{Q v}} < \blue{\gamma}_{\max}$
\begin{align*}
f(u_k)-f(u_{k+1})  & = \frac{\ip{\nabla f(u_k)}{v}^2}{2 \ip{v}{Qv}} \\
& \ge \frac{\bar \Phi_{g}(\bar A)^2}{8 \diam(Q^{1/2}A)^2} \cdot \frac{\ip{\nabla f(u_k)}{u_k-u^*}^2}{\|\bar A(x_k-z)\|_g^2} \\
& \ge \frac{\bar \Phi_{g}(\bar A)^2}{8\diam(Q^{1/2}A)^2}\cdot(f(u_k) - f^*),
\end{align*}
and consequently
\[
f(u_{k+1})-f^*  \le \left(1-\frac{\bar \Phi_{g}\left(\bar A\right)^2}{8\diam(Q^{1/2}A)^2}\right)\cdot(f(u_k) - f^*).
\]
The rest of the proof follows exactly as the last two paragraphs of the proof of Theorem~\ref{thm.fw.linear}.

We next show that $g=Q^{1/2}Az$ does not depend on the specific choice of $z \in Z^*$ and $Z^*\subseteq Z(g)$. 

Let $z,z'\in Z^*$.  From the optimality of $u^*=Az$ and $u' = Az'$ we get both
$
0 \le \ip{Qu^*+b}{u'-u^*}$ and $0 \le \ip{Qu'+b}{u^*-u'}.
$
Hence
\[
0\le -\ip{Q(u^*-u')}{u^*-u'}=-\|Q^{1/2}(u^*-u')\|^2 \Rightarrow Q^{1/2} u^* =  Q^{1/2} u'.
\]
That is, $Q^{1/2}Az=Q^{1/2}Az'$.  In other words, $g = Q^{1/2}Az$ does not depend on the specific choice of $z\in Z^*.$

Next we show $Z^* \subseteq Z(g)$.  To that end, let $z\in Z^*$ and $u^* = Az$.  The optimality of $u^*$ implies that for all $u = Ax\in \conv(A)$
\begin{align*}
0 &\le \ip{\nabla f(u^*)}{u-u^*} \\ &
= \ip{Qu^*+b}{u-u^*} \\ & =
\ip{Q^{1/2}Az}{Q^{1/2}(Ax-Az)} + \ip{b}{Ax-Az}
\\ & = \ip{g}{Q^{1/2}Ax} + \ip{b}{Ax} - \left(\ip{g}{Q^{1/2}Az} + \ip{b}{Az}\right).
\end{align*}
Hence $z\in Z(g)$.  Since this holds for all $z\in Z^*$ we have $Z^*\subseteq Z(g)$.

\medskip

We next show each of the above three premises.  Observe that 
\[
\ip{\nabla f(u_k)}{u_k - u^*} = f(u_k) - f^* + \frac{1}{2}\ip{u_k - u^*}{Q(u_k-u^*)} \ge f(u_k) - f^*,
\]
and
\begin{align*}
\ip{\nabla f(u_k)}{u_k - u^*} &= \ip{Q(u_k-u^*)}{u_k - u^*} + \ip{Qu^* + b}{u_k-u^*} \\ 
&= \ip{QA(x_k-z)}{A(x_k - z)} + \ip{QAz}{A(x_k-z)} + \ip{b}{A(x_k-z)} \\ 
& = \|Q^{1/2}A(x_k-z)\|^2 + \ip{g}{Q^{1/2}A(x_k-z)} + b\transp A(x_k-z)\\
& = \left\|\bar A(x_k-z)\right\|^2_g.
\end{align*}
Thus
\[
\ip{\nabla f(u_k)}{u_k - u^*}  \ge \left\|\bar A(x_k-z)\right\|_g \cdot \sqrt{f(u_k) - f^*}
\]
and so Premise 1' follows.

\medskip

Premise 2' is an immediate consequence of \eqref{eq.step.length} and the form of $f$.

\medskip

The first inequality in Premise 3' follows from the choice of $v$.  To prove the second inequality in Premise 3', take $\bar d:=\frac{\bar A(x_k-z)}{\|\bar A(x_k-z)\|_g}$ and 
$$
p := \frac{\|\bar A(x_k-z)\|_g}{\ip{\nabla f(u_k)}{u_k-u^*}}\cdot\matr{Q^{1/2}u_k\\1} = \frac{\left\|\bar A(x_k-z)\right\|_g}{\ip{\matr{Q^{1/2}u_k\\1}}{
 \bar A(x_k-z)}}\cdot\matr{Q^{1/2}u_k\\1}.
$$
This choice of $p$ guarantees that $\ip{p}{\bar d} = 1$.  Furthermore, observe that for $i,j \in \{1,\dots,n\}$
\[
\ip{\matr{Q^{1/2}u_k \\ 1}}{\bar a_i - \bar a_j} = \ip{Qu_k}{a_i-a_j} + \ip{b}{a_i-a_j} = \ip{\nabla f(u_k)}{a_i-a_j}.
\]
Hence the above choice of $p$ yields
\[
\dmax_{\ell\in I(x_k),j\in\{1,\dots,n\}} \ip{p}{\bar a_\ell - \bar a_j} = 
\dfrac{\|\bar A(x_k-z)\|_g}
{\ip{\nabla f(u_k)}{u_k-u^*}} \cdot 
\dmax_{\ell\in I(x_k),j\in\{1,\dots,n\}} \ip{\nabla f (u_k)}{a_\ell - a_j}.
\]
On the other hand, the construction of $\bar \Phi_g(\bar A,x,z)$ yields
\[
\dmax_{\ell\in I(x_k),j\in\{1,\dots,n\}} \ip{p}{\bar a_\ell - \bar a_j} \ge \bar\Phi_g(\bar A,x_k,z)\ge \bar\Phi_g(\bar A).
\]
Putting the above two together we get
\[
\dmax_{\ell\in I(x_k),j\in\{1,\dots,n\}} \ip{\nabla f (u_k)}{a_\ell - a_j}\ge \bar\Phi_g(\bar A) \cdot \dfrac{\|\bar A(x_k-z)\|_g}
{\ip{\nabla f(u_k)}{u_k-u^*}}.
\]
\qed

\end{proof}
}

\blue{
As Example~\ref{ex.def.r} in Section~\ref{sec.examples} shows, the minimum in the expression for $r$ in Theorem~\ref{thm.quad.conv} cannot be omitted because $\frac{\bar \Phi_{g}\left(\bar A\right)^2}{8\diam(Q^{1/2}A)^2} > \frac{1}{2}$ can occur.  
}

\medskip

\blue{The next proposition establishes a property of $\bar \Phi_g$ similar to that stated in Proposition~\ref{prop.restricted.width} for $\Phi$. It also provides a bound on $\bar \Phi_g$ in terms of $\Phi$. 
Proposition~\ref{prop.bar.phi} relies on one more piece of notation.  For $ \bar A \in \R^{(m+1)\times n}$ and $g \in \R^m$ define
\[
\delta(g) = \max_{x\in \Delta_{n-1},z\in Z(g)} \ip{\matr{g\\1}}{\bar A(x-z)} = \max_{x,z\in \Delta_{n-1}} \ip{\matr{g\\1}}{\bar A(x-z)}.\]

\begin{proposition}\label{prop.bar.phi}
Assume $\bar A\in \R^{(m+1)\times n}$ and $g\in \R^m$.  Assume $x\in\Delta_{n-1}$ and $z\in Z(g)$ are such that $\bar A(x-z) \ne 0$, and let $\bar d:=\frac{\bar A(x-z)}{\|\bar A(x-z)\|_g}$.
\begin{description}
\item[(a)] The following identity holds 
\begin{multline}\label{prop.identity.bar.phi}
\bar \Phi_g\left(\bar A,x,z\right) = \\
\max\left\{\lambda: \exists w,y\in \Delta_{n-1}, \; I(w) \subseteq I(x),\; \bar A(w-y) = \lambda \bar d\right\}.
\end{multline}
Furthermore, if $p$ attains the minimum in 
\eqref{eq.def.bar.phi} and $\bar u = \bar Aw, \; \bar v = \bar Ay$ maximize the right hand side in \eqref{prop.identity.bar.phi} then $\bar v\in \conv(\bar B),$ for $\bar B = \Argmin_{\bar a \in \bar A} \ip{p}{\bar a}$, and  $\bar u\in\conv(\bar A \setminus \bar B).$  Furthermore,
\[
\bar \Phi_g(\bar A,x,z) = \|\bar u-\bar v\|_g.
\]
\item[(b)] 
If $\delta(g) = 0$ then 
$
\bar \Phi_g\left(\bar A,x,z\right) = \Phi(A,x,z)
$ for $A = \matr{I_m & 0 }\bar A.$ 

Otherwise
$
\bar \Phi_g\left(\bar A,x,z\right) \ge \Phi(\hat A,x,z)$ for 
$
\hat A = \matr{I_m & 0\\ \frac{g\transp}{\sqrt{\delta(g)}}& \frac{1}{\sqrt{\delta(g)}}}\bar A.
$

In particular, if at least two columns of $\bar A$ are different then $\bar \Phi_g\left(\bar A\right) > 0$ for all $g \in \R^m$. 
\end{description}
\end{proposition}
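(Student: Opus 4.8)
The plan is to prove (a) by mimicking the linear-programming-duality argument behind Proposition~\ref{prop.restricted.width}, and to prove (b) by reducing $\bar\Phi_g$ to an ordinary $\Phi$ via a linear change of variables, exploiting that $z\in Z(g)$ pins $\ip{\matr{g\\1}}{\bar A(x-z)}$ to the interval $[0,\delta(g)]$. For (a), note that \eqref{eq.def.bar.phi} has exactly the form of \eqref{eq.def.phi}: it is $\dmin_{p:\ip{p}{\bar d}=1}\dmax_{\bar s\in S,\,\bar a\in\bar A}\ip{p}{\bar s-\bar a}$ with $S=\{\bar a_i:i\in I(x)\}$. Rewriting this as the linear program $\dmin_{p,t,\tau}\{t+\tau:\ \bar A_I\transp p\le t\1_I,\ \bar A\transp p\ge-\tau\1,\ \ip{\bar d}{p}=1\}$ and dualizing gives \eqref{prop.identity.bar.phi} exactly as in Proposition~\ref{prop.restricted.width}. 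Picking $p$ primal-optimal with $t=\max_i\ip{p}{\bar a_i}$, $-\tau=\min_i\ip{p}{\bar a_i}$ and $(w,y,\lambda)$ dual-optimal, complementary slackness forces $y_i>0$ only if $\bar a_i\in\bar B=\Argmin_{\bar a\in\bar A}\ip{p}{\bar a}$, hence $\bar v=\bar Ay\in\conv(\bar B)$, and $w_i>0$ only if $\bar a_i$ attains $\max_{i\in I(x)}\ip{p}{\bar a_i}$. Since $(x,z,\|\bar A(x-z)\|_g)$ is dual-feasible, $\lambda\ge\|\bar A(x-z)\|_g>0$, so $\max_{i\in I(x)}\ip{p}{\bar a_i}>\min_i\ip{p}{\bar a_i}$, placing $\bar u=\bar Aw$ in $\conv(\bar A\setminus\bar B)$. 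Finally $\bar u-\bar v=\lambda\bar d$ is a positive multiple of $\bar A(x-z)$, and the identity $\bar\Phi_g(\bar A,x,z)=\|\bar u-\bar v\|_g$ follows by evaluating $\|\cdot\|_g$ on this vector directly, using $z\in Z(g)$ to resolve the absolute value in the definition of $\|\cdot\|_g$.

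For (b), the common observation is that $z\in Z(g)$ gives $\ip{\matr{g\\1}}{\bar A(x-z)}\ge0$ while the definition of $\delta(g)$ gives $\ip{\matr{g\\1}}{\bar A(x-z)}\le\delta(g)$. If $\delta(g)=0$, this forces $\ip{\matr{g\\1}}{\bar a_i}$ to be constant in $i$, so each $(\bar a_i)_{m+1}$ is an affine function of the first $m$ coordinates of $\bar a_i$; hence $\bar A(x-z)=\matr{A(x-z)\\ -g\transp A(x-z)}$ for $A=\matr{I_m&0}\bar A$, whence $\|\bar A(x-z)\|_g=\|A(x-z)\|$. Writing $p=\matr{\tilde p\\ p_{m+1}}$ with $\tilde p\in\R^m$ and substituting $q=\tilde p-p_{m+1}g$ turns each $\ip{p}{\bar a_\ell-\bar a_j}$ into $\ip{q}{a_\ell-a_j}$ and the constraint $\ip{p}{\bar d}=1$ into $\ip{q}{d}=1$ with $d=A(x-z)/\|A(x-z)\|$; since $q$ ranges over all of $\R^m$, $\bar\Phi_g(\bar A,x,z)=\Phi(A,x,z)$. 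If $\delta(g)>0$, the matrix $M=\matr{I_m&0\\ g\transp/\sqrt{\delta(g)}&1/\sqrt{\delta(g)}}$ is invertible and $\hat A=M\bar A$; substituting $p=M\transp q$ keeps $\max_{\ell,j}\ip{p}{\bar a_\ell-\bar a_j}=\max_{\ell,j}\ip{q}{\hat a_\ell-\hat a_j}$ intact while turning feasibility into $\ip{q}{\hat A(x-z)}=\|\bar A(x-z)\|_g$. Because $\|\hat A(x-z)\|^2=\|A(x-z)\|^2+\ip{\matr{g\\1}}{\bar A(x-z)}^2/\delta(g)\le\|A(x-z)\|^2+\ip{\matr{g\\1}}{\bar A(x-z)}=\|\bar A(x-z)\|_g^2$, a positive rescaling of a minimizer $p$ for $\bar\Phi_g(\bar A,x,z)$ is feasible for the problem defining $\Phi(\hat A,x,z)$ with a no-larger, still nonnegative objective, so $\bar\Phi_g(\bar A,x,z)\ge\Phi(\hat A,x,z)$. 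Taking the minimum over admissible $x$ and $z\in Z(g)$, and using $\bar A(x-z)\ne0$ iff $A(x-z)\ne0$ (resp.\ iff $\hat A(x-z)\ne0$), gives $\bar\Phi_g(\bar A)=\Phi(A)$ when $\delta(g)=0$ and $\bar\Phi_g(\bar A)\ge\Phi(\hat A)$ when $\delta(g)>0$. Since $A$ (when $\delta(g)=0$) and $\hat A$ (always, as $M$ is invertible) have two distinct columns whenever $\bar A$ does, Theorem~\ref{main.thm} yields $\Phi(A)>0$, resp.\ $\Phi(\hat A)>0$, hence $\bar\Phi_g(\bar A)>0$.

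The step I expect to demand the most care is the norm comparison in the $\delta(g)>0$ case: obtaining $\ip{\matr{g\\1}}{\bar A(x-z)}^2/\delta(g)\le\ip{\matr{g\\1}}{\bar A(x-z)}$ from the two-sided bound on $\ip{\matr{g\\1}}{\bar A(x-z)}$, and then keeping the change of variables $p=M\transp q$ aligned with the constraint rescaling so that the two min--max problems line up. A secondary delicate point is the final identity in part (a), where the non-homogeneity of $\|\cdot\|_g$ must be tracked carefully.
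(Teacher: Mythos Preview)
Your approach to part~(a) is exactly the paper's: both invoke the linear-programming duality behind Proposition~\ref{prop.restricted.width}, and the paper's proof of~(a) consists of the single sentence ``straightforward modification of the proof of Proposition~\ref{prop.restricted.width} with $\bar A$ in place of $A$, $\bar d$ in place of $d$, and $\|\cdot\|_g$ in place of $\|\cdot\|$.''

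For part~(b) you take a different but equivalent route. The paper works on the \emph{dual} side established in~(a): it notes that for $r\ne0$ the constraint $\bar A(w-y)=t\,\bar A(x-z)$ is equivalent to $\matr{I_m&0\\rg\transp&r}\bar A(w-y)=t\matr{I_m&0\\rg\transp&r}\bar A(x-z)$, and uses this (with $r=1/\sqrt{\delta(g)}$ when $\delta(g)>0$, and with the last row dropped as redundant when $\delta(g)=0$) to obtain the \emph{equality} $\bar\Phi_g(\bar A,x,z)/\|\bar A(x-z)\|_g=\Phi(\hat A,x,z)/\|\hat A(x-z)\|$, after which the inequality comes solely from $\|\hat A(x-z)\|\le\|\bar A(x-z)\|_g$. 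You stay on the \emph{primal} min--max side and substitute $p=M\transp q$, the adjoint of the same map; a rescaling then recovers the same ratio identity since the objective is positively homogeneous in~$q$. Both arguments are sound; the paper's is marginally more direct because the dual feasible sets coincide on the nose rather than up to a scaling.

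There is one genuine gap in your sketch, which the paper's proof also glosses over: the final identity $\bar\Phi_g(\bar A,x,z)=\|\bar u-\bar v\|_g$ in part~(a) does \emph{not} follow from ``evaluating $\|\cdot\|_g$ directly.'' Writing $\bar u-\bar v=\lambda\bar d=t\,\bar A(x-z)$ with $t=\lambda/\|\bar A(x-z)\|_g\ge1$ and $\gamma:=\matr{g\transp&1}\bar A(x-z)\ge0$ (yes, $z\in Z(g)$ resolves the absolute value), one computes $\|\bar u-\bar v\|_g^2=t^2\|A(x-z)\|^2+t\gamma$ whereas $\lambda^2=t^2\|A(x-z)\|^2+t^2\gamma$; these coincide only when $t=1$ or $\gamma=0$. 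Concretely, with $\bar A=\matr{1&0&-1\\1&0&0}$, $g=0$, $z=e_2$, $x=\tfrac12e_1+\tfrac12e_2$ one gets $\bar\Phi_g(\bar A,x,z)=\sqrt{3}$, yet the optimal $\bar u=\bar a_1$, $\bar v=\bar a_2$ satisfy $\|\bar u-\bar v\|_g=\sqrt{2}$. The non-homogeneity of $\|\cdot\|_g$ is precisely what breaks the analogue of the step $\|u-v\|=\lambda\|d\|=\lambda$ in Proposition~\ref{prop.restricted.width}. Since this particular identity is not used elsewhere in the paper, the defect does not propagate.
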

}
\blue{
\begin{proof}
\begin{description}
\item[(a)] The proof is a straightforward modification of the proof of Proposition~\ref{prop.restricted.width} with $\bar A$ in place of $A,$ $\bar d$ in place of $d$, and $\|\cdot\|_g$ in place of $\|\cdot\|$.
\item[(b)] 
Assume $w,y\in \Delta_{n-1}$ and $\lambda \in \R$.  Observe that $\bar A (w-y) = \lambda d$ if and only if $\bar A (w-y) = t \bar A(x-z)$ for $t = \frac{\lambda}{\|\bar A(x-z)\|_g}$.  Furthermore, for $r\ne 0$ 
\begin{equation}\label{eq.red.eqn}
\bar A(w-y) = t \bar A(x-z)  \Leftrightarrow 
\matr{I_m & 0 \\ rg\transp & r} \bar A(w-y) = t \matr{I_m & 0 \\ rg\transp & r} \bar A(x-z).
\end{equation}

Now consider two possible cases separately.  

\medskip

{\bf Case 1: $\delta(g) = 0$.}  In this case $\ip{\matr{g\\1}}{\bar A y}=\matr{g\transp & 1} \bar Ay$ is the same  for all $y\in \Delta_{n-1}$ and consequently the last row of equations in \eqref{eq.red.eqn}  is redundant.  Thus for $A = \matr{I_m & 0}\bar A$
\[
 \bar A(w-y) = t \bar A(x-z)  \Leftrightarrow A(w-y) = t A(x-z)
\]
From part (a) and Proposition~\ref{prop.restricted.width} we get
\begin{align*}
& \frac{\bar \Phi_g\left(\bar A,x,z\right)}{\|\bar A(x-z)\|_g}  \\
 &= 
\max\left\{t: \exists w,y\in \Delta_{n-1}, \; I(w) \subseteq I(x),\; \bar A(w-y) = t \bar A(x-z) \right\}\\
& = \max\left\{t: \exists w,y\in \Delta_{n-1}, \; I(w) \subseteq I(x),\;  A(w-y) = t A(z-x)\right\}\\
& = \frac{\Phi(A,x,z)}{\|A(x-z)\|}.
\end{align*}
Furthermore, since $\delta(g) = 0$ we have $ \|\bar A(x-z)\|_g = \|A(x-z)\|$ and consequently $\bar \Phi_g\left(\bar A,x,z\right) = \Phi(A,x,z).$

\bigskip

{\bf Case 2: $\delta(g) > 0$.} Let $
\hat A := \matr{I_m & 0 \\ \frac{g\transp}{\sqrt{\delta(g)}} & \frac{1}{\sqrt{\delta(g)}}} \bar A
$. Plugging $r = \frac{1}{\sqrt{\delta}}$ in \eqref{eq.red.eqn} we get 
\[
\bar A(w-u) = t \bar A(x-z) \Leftrightarrow \hat A(w-u) = t \hat A(x-z).
\]
From part (a) and Proposition~\ref{prop.restricted.width} we get
\begin{align*}
& \frac{\bar \Phi_g\left(\bar A,x,z\right)}{\|\bar A(x-z)\|_g} \\
 &= 
\max\left\{t: \exists w,y\in \Delta_{n-1}, \; I(w) \subseteq I(x),\; \bar A(w-y) = t \bar A(x-z) \right\}\\
& = \max\left\{t: \exists w,y\in \Delta_{n-1}, \; I(w) \subseteq I(x),\;  \hat A(w-y) = t \hat A(x-z)\right\}\\
& = \frac{\Phi(\hat A,x,z)}{\| \hat A(x-z) \|}.
\end{align*}
Since $0\le \matr{g\transp & 1}\bar A(x-z) \le \delta(g),$ it follows that
\begin{align*}
\|\hat A(x-z) \|^2 &= 
\|A(x-z)\|^2 + \frac{\left(\matr{g\transp & 1}\bar A(x-z)\right)^2}{\delta(g)} \\ &\le 
\|A(x-z)\|^2 + \vert\matr{g\transp & 1}\bar A(x-z) \vert\\ &= \|\bar A(x-z)\|_g^2
\end{align*}
and so
\[
\bar \Phi_g(\bar A,x,z) = \Phi(\hat A,x,z) \cdot \frac{\|\bar A(z-x)\|_g}{\| \hat A(x-z) \|}\ge 
\Phi(\hat A,x,z).
\]
Finally, observe that if $\delta(g) = 0$ and two columns of $\bar A$ are different, then at least two columns of $A$ as different as well.  Thus
\[
\bar \Phi_g(\bar A) \ge \Phi(A,Z(g)) \ge \Phi(A) > 0.
\]
On the other hand, if $\delta(g) > 0$ and two columns of $\bar A$ are different, then at least two columns of $\hat A$ as different as well.  Thus
\[
\bar \Phi_g(\bar A) \ge \Phi(\hat A,Z(g)) \ge \Phi(\hat A) > 0.
\]
In either case we have $\bar \Phi_g(\bar A) > 0.$
\qed
\end{description}
\end{proof}
}

\section{Composite convex objective}
\label{sec.non-convex}

\blue{We next extend} the main ideas from Section~\ref{sec.quadratic} to the case when the objective is a composite function of the form \[
f(u) = \blue{h}(Eu)+ \ip{b}{u},\]
where $E\in \R^{p\times m}, \; b\in \R^m,$ and $\blue{h}:\R^p \rightarrow \R$ is a strongly convex function with Lipschitz gradient.  

Consider problem~\eqref{min.problem} for this objective function.  If $L$ is an upper bound on the Lipschitz constant of $\nabla \blue{h}$, then 
\[
f(u_k+\blue{\gamma} v) \le f(u_k) + \blue{\gamma}\ip{\nabla f(u_k)}{v} + \frac{L\blue{\gamma}^2 \|Ev\|^2}{2}.
\]
Consequently, $\blue{\gamma}_k$ in Step 9 of Algorithm~\ref{alg:frankwolfe} could be computed as follows
\begin{align}\label{eq.step.length.non}
\blue{\gamma}_k &:= \argmin_{\blue{\gamma} \in [0,\blue{\gamma}_{\max}]} \left\{
\ip{\blue{\gamma}\nabla f(u_k)}{v} + \frac{L\blue{\gamma}^2 \|Ev\|^2}{2}\right\} \notag\\
& = \left\{ 
\begin{array}{ll} 
\min\left\{\blue{\gamma}_{\max}, - \frac{\ip{\nabla f(u_k)}{v}}{L\|Ev\|^2} \right\} & \text{ if } \; Ev\ne 0\\ \blue{\gamma}_{\max} & \text{ if } \; Ev=0. 
\end{array} 
\right.
\end{align}
Once again, linear convergence can be obtained from Theorem~\ref{thm.fw.linear} and the error bound~\cite[Lemma 2.5]{BeckS15}.  However, the more explicit rate of convergence in Theorem~\ref{thm.non-strong} holds.  
 
\begin{theorem}\label{thm.non-strong} Assume in problem~\eqref{min.problem} the objective is $f(u) = \blue{h}(Eu)+ \ip{b}{u}$ where $\blue{h}$ is $\mu$-strongly convex and $\nabla h$ is Lipschitz. 
\blue{Assume the matrix $\bar A = \matr{EA\\ \frac{1}{\mu}b\transp A}$  has at least two different columns as otherwise problem~\eqref{min.problem} is trivial.

If $x_0\in\Delta_{n-1}$ in Step 1 of Algorithm~\ref{alg:frankwolfe} is a vertex of $\Delta_{n-1}$ and the steplength $\blue{\gamma}_k$ is computed as in~\eqref{eq.step.length.non} for some upper bound $L$ on the Lipschitz constant of $\nabla \blue{h}$ then the convergence rate \eqref{lin.conv} holds with
\[
r = \min\left\{\blue{\frac{\mu}{L}\cdot\frac{\bar \Phi_g\left(\bar A\right)^2}{8\diam(EA)^2}},\frac{1}{2}\right\},
\]
where $g = \frac{1}{\mu}\nabla h(EAz)$ for $z \in Z^*$, which does not depend on the specific choice of $z\in Z^*.$ }
\end{theorem}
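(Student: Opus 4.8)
The plan is to transcribe the proof of Theorem~\ref{thm.quad.conv}, replacing the quadratic identities used there with arguments that invoke only the $\mu$-strong convexity of $h$ and the $L$-Lipschitz continuity of $\nabla h$. Write $u^* = Az$ for some $z\in Z^*$, $u_k = Ax_k$, and $S_k := \ip{\nabla f(u_k)}{u_k-u^*}$. The first step is to check that $g = \frac1\mu\nabla h(EAz)$ is independent of the choice of $z\in Z^*$ and that $Z^*\subseteq Z(g)$. For $z,z'\in Z^*$, adding the optimality inequalities for $Az$ and $Az'$ gives $0\ge\ip{\nabla f(Az)-\nabla f(Az')}{Az-Az'}=\ip{\nabla h(EAz)-\nabla h(EAz')}{EAz-EAz'}\ge\mu\|EAz-EAz'\|^2$ by $\mu$-strong convexity of $h$, so $EAz=EAz'$ and hence $\nabla h(EAz)=\nabla h(EAz')$. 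The inclusion $Z^*\subseteq Z(g)$ follows by rewriting the optimality condition for $u^*=Az$: for every $x\in\Delta_{n-1}$, $0\le\ip{\nabla f(u^*)}{Ax-Az}$, and dividing by $\mu$ turns this into $\ip{\matr{g\\1}}{\bar A x}\ge\ip{\matr{g\\1}}{\bar A z}$, i.e.\ $z\in Z(g)$.

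With this in hand, the proof reduces to the following three premises, valid whenever $u_k\ne u^*$ (the nontrivial case), which parallel the three premises of Theorem~\ref{thm.quad.conv}. \textbf{Premise 1':} $S_k\ge\sqrt{\mu}\,\|\bar A(x_k-z)\|_g\,\sqrt{f(u_k)-f^*}$. \textbf{Premise 2':} if $\gamma_k=-\frac{\ip{\nabla f(u_k)}{v}}{L\|Ev\|^2}<\gamma_{\max}$ then $f(u_k)-f(u_{k+1})\ge\frac{\ip{\nabla f(u_k)}{v}^2}{2L\|Ev\|^2}$. \textbf{Premise 3':} $-\ip{\nabla f(u_k)}{v}\ge\frac12\dmax_{\ell\in I(x_k),\,j\in\{1,\dots,n\}}\ip{\nabla f(u_k)}{a_\ell-a_j}\ge\frac{\bar\Phi_g(\bar A)}{2}\cdot\frac{S_k}{\|\bar A(x_k-z)\|_g}$. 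Granting these, when $\gamma_k<\gamma_{\max}$ one chains Premise~2' (with $\|Ev\|\le\diam(EA)$), Premise~3', and Premise~1' to obtain $f(u_{k+1})-f^*\le\left(1-\frac{\mu}{L}\cdot\frac{\bar\Phi_g(\bar A)^2}{8\,\diam(EA)^2}\right)(f(u_k)-f^*)$; the cases $\gamma_k=\gamma_{\max}\ge1$ (convexity of $f$ together with the descent inequality gives a factor $\le\frac12$, exactly as in Case~2 of Theorem~\ref{thm.fw.linear}) and $\gamma_k=\gamma_{\max}<1$ (simple non-increase), together with the counting argument from the proof of Theorem~\ref{thm.fw.linear}, finish the proof with $r=\min\left\{\frac{\mu}{L}\cdot\frac{\bar\Phi_g(\bar A)^2}{8\,\diam(EA)^2},\frac12\right\}$. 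Note that $\bar\Phi_g(\bar A)>0$ by Proposition~\ref{prop.bar.phi}.

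For Premise~1' I would bound $S_k$ from below in two ways. First, by convexity of $h$, $S_k=\ip{\nabla h(Eu_k)}{Eu_k-Eu^*}+\ip{b}{u_k-u^*}\ge h(Eu_k)-h(Eu^*)+\ip{b}{u_k-u^*}=f(u_k)-f^*$. Second, splitting $S_k=\ip{\nabla f(u_k)-\nabla f(u^*)}{u_k-u^*}+\ip{\nabla f(u^*)}{u_k-u^*}$, the first term is $\ge\mu\|EA(x_k-z)\|^2$ by $\mu$-strong convexity of $h$, while $\ip{\nabla f(u^*)}{u_k-u^*}=\mu\left(g\transp EA(x_k-z)+\frac1\mu b\transp A(x_k-z)\right)\ge0$ by optimality of $u^*$; recognizing the parenthesized quantity as exactly the (nonnegative) second-block contribution to $\|\bar A(x_k-z)\|_g^2$, one gets $S_k\ge\mu\|EA(x_k-z)\|^2+\ip{\nabla f(u^*)}{u_k-u^*}=\mu\|\bar A(x_k-z)\|_g^2$. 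Multiplying the two lower bounds on $S_k$ and taking square roots gives Premise~1'. Premise~2' is immediate from the step-length formula~\eqref{eq.step.length.non} and the descent inequality $f(u_k+\gamma v)\le f(u_k)+\gamma\ip{\nabla f(u_k)}{v}+\frac{L\gamma^2\|Ev\|^2}{2}$. In Premise~3' the first inequality is the standard consequence of the regular/away step rule (cf.\ Section~\ref{sec.FrankWolfe}); for the second, take $\bar d=\frac{\bar A(x_k-z)}{\|\bar A(x_k-z)\|_g}$ and $p=\frac{\|\bar A(x_k-z)\|_g}{S_k}\matr{\nabla h(Eu_k)\\\mu}$, verify $\ip{p}{\bar d}=1$ from $\ip{\matr{\nabla h(Eu_k)\\\mu}}{\bar A(x_k-z)}=S_k$, use $\ip{\matr{\nabla h(Eu_k)\\\mu}}{\bar a_i-\bar a_j}=\ip{\nabla f(u_k)}{a_i-a_j}$, and invoke $\dmax_{\ell\in I(x_k),\,j}\ip{p}{\bar a_\ell-\bar a_j}\ge\bar\Phi_g(\bar A,x_k,z)\ge\bar\Phi_g(\bar A)$ (legitimate since $z\in Z(g)$), rearranging to isolate $\dmax_{\ell,j}\ip{\nabla f(u_k)}{a_\ell-a_j}$.

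I expect the main obstacle to be Premise~1': one has to verify that the two lower bounds on $S_k$ assemble exactly into $\sqrt{\mu}\,\|\bar A(x_k-z)\|_g\sqrt{f(u_k)-f^*}$, which hinges on identifying $\ip{\nabla f(u^*)}{u_k-u^*}$ with $\mu$ times the extra coordinate $g\transp EA(x_k-z)+\frac1\mu b\transp A(x_k-z)$ under the square root in $\|\bar A(x_k-z)\|_g$, and on the nonnegativity of that coordinate (which lets one drop the absolute value in the definition of $\|\cdot\|_g$). Everything else is a routine transcription of the proofs of Theorem~\ref{thm.fw.linear} and Theorem~\ref{thm.quad.conv}, with the exact line search of the latter replaced by the majorization--minimization step~\eqref{eq.step.length.non}; it is precisely this replacement that introduces the extra factor $\frac{\mu}{L}$ in the convergence rate.
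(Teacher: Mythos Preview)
Your proposal is correct and follows essentially the same route as the paper's own proof: the same independence-of-$g$ and $Z^*\subseteq Z(g)$ arguments, the same three premises (your Premise~1' is exactly the paper's Premise~1'', obtained by combining $S_k\ge f(u_k)-f^*$ with $S_k\ge\mu\|\bar A(x_k-z)\|_g^2$), the same choice of $p$ in Premise~3', and the same case split and counting argument to finish. The only cosmetic difference is that you make the ``multiply two lower bounds and take square roots'' step explicit, whereas the paper leaves it implicit.
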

\blue{
\begin{proof}
This proof is a straightforward modification of the proof of Theorem~\ref{thm.quad.conv}.  It is a consequence of 
$Z^*\subseteq Z(g)$ and the following three premises that hold provided $u_k \ne u^*$.

\bigskip

\noindent
{\bf Premise 1'':} 
\[
\ip{\nabla f(u_k)}{u_k-u^*} \ge \|\bar A(x_k-z)\|_g\sqrt{\mu(f(u_k) - f^*)}.
\]
\bigskip

\noindent
{\bf Premise 2'':} If $\blue{\gamma}_k =-\frac{\ip{\nabla f(u_k)}{v}}{L\|E v\|^2} < \blue{\gamma}_{\max}$ then
\[
f(u_k)-f(u_{k+1}) \le \frac{\ip{\nabla f(u_k)}{v}^2}{2L\|Ev\|^2}.
\]

\bigskip

\noindent
{\bf Premise 3'':} 
\[
-\ip{\nabla f (u_k)}{v} \ge
\frac{\dmax_{\ell\in I(x_k),j\in\{1,\dots,n\}} \ip{\nabla f (u_k)}{a_\ell - a_j}}{
2} \ge \frac{\bar\Phi_g(\bar A)}{2}\frac{\ip{\nabla f (u_k)}{u_k-u^*}}{\|A(x_k-z)\|_g}.
\]

We first show that   $g=\frac{1}{\mu}\nabla h(EAz)$ does not depend on the specific choice of $z \in Z^*$ and $Z^*\subseteq Z(g)$. 

Let $z,z'\in Z^*$.  From the optimality of $u^*=Az$ and $u' = Az'$ we get both
$
0 \le \ip{\nabla f(u^*)}{u'-u^*}$ and $0 \le \ip{\nabla f(u')}{u^*-u'}
$.  Since $\nabla f(y) = E\transp \nabla h(EAz) + b$ and $h$ is $\mu$-strongly convex we get
\begin{align*}
0 &\le -\ip{\nabla f(u^*)-\nabla f(u')}{u^*-u'}\\ &=
-\ip{\nabla h(EAz) - \nabla h(EAz')}{EA(z-z')} \\&\le - \mu \|EA(z-z')\|^2. 
\end{align*}
This implies that $EAz = EAz'$ and thus $g = \frac{1}{\mu}\nabla h(EAz)$ does not depend on the specific choice of $z\in Z^*.$

We next show $Z^*\subseteq Z(g).$  To that end, let $z\in Z^*$ and $u^* = Az$.  The optimality of $u^*$ implies that for all $y = Ax\in \conv(A)$
\begin{align*}
0 &\le \frac{1}{\mu}\ip{\nabla f(u^*)}{u-u^*} \\ &= \frac{1}{\mu}\ip{E\transp \nabla h(Eu^*) + b}{u-u^*} \\&= 
\ip{g}{EAx} + \ip{\frac{1}{\mu}b}{Ax} - \left(\ip{g}{EAz} + \ip{\frac{1}{\mu}b}{Az}\right).
\end{align*}
Hence $z\in  Z(g)$.  Since this holds for all  $z\in Z^*$, we have $Z^* \subseteq Z(g)$.

We next show each of the above three premises. Since $f$ is convex we readily have
\[
\ip{\nabla f(u_k)}{u_k - u^*}  \ge f(u_k) - f^*.
\]
On the other hand, since $h$ is $\mu$-strongly convex we also have
\begin{align*}
\ip{\nabla f(u_k)}{u_k - u^*}  &= \ip{\nabla f(u_k) - \nabla f(u^*)}{u_k - u^*}+\ip{\nabla f(u^*)}{u_k - u^*} \\ & 
= \ip{\nabla h(u_k) - \nabla h(u^*)}{E(u_k - u^*)} +  \ip{\nabla f(u^*)}{u_k - u^*}\\
& \ge \mu\|E(u_k-u^*)\|^2 + \ip{\nabla h(Eu^*)}{E(u_k - u^*)}+ \ip{b}{u_k-u^*}  \\
& = \mu \left(\|EA(x_k-z)\|^2  + \ip{g}{EA(x_k-z)}+ \frac{1}{\mu} b\transp A(x_k-z)\right)\\
&= \mu \|\bar A(x_k - z)\|^2_g.
\end{align*}
Therefore
\[
\ip{\nabla f(u_k)}{u_k - u^*}  \ge   \|\bar A(x_k - z)\|_g \cdot\sqrt{\mu (f(u_k) - f^*)}
\]
and Premise 1'' follows.

\medskip

Premise 2'' is an immediate consequence of \eqref{eq.step.length.non} and the fact the $L$ is a bound on the Lipschitz constant  of $\nabla \blue{h}$.

\medskip

The proof of Premise 3'' is also similar to that of Premise 3' in the  proof of Theorem~\ref{thm.quad.conv}.  The first inequality follows from the choice of $v$.  For the second inequality take $\bar d:=
\frac{A(x_k-z)}{\|A(x_k-z)\|_g}$ and
\begin{align*}
p :&= \frac{\|\bar A(x_k-z)\|_g}{\ip{\nabla f(u_k)}{u_k-u^*}}\cdot\matr{\nabla h(Eu_k)\\ \mu} 
= \frac{\left\|\bar A(x_k-z)\right\|_g}{\ip{\matr{\nabla h(Eu_k)\\ \mu}}{
 \bar A(x_k-z)}}\cdot\matr{\nabla h(Eu_k)\\\mu}.
\end{align*}
This choice of $p$ guarantees that $\ip{p}{\bar d} = 1$.  
Furthermore, observe that for all $i,j\in\{1,\dots,n\}$
\[
\ip{\matr{\nabla h(Eu_k)\\ \mu}}{\bar a_i - \bar a_j} 
 = \ip{E\transp \nabla h(Eu_k)}{a_i - a_j} + \ip{b}{a_i-a_j} 
 = \ip{\nabla f(u_k)}{a_i-a_j}.
\]
Hence the above choice of $p$ yields
\[
\dmax_{\ell\in I(x_k),j\in\{1,\dots,n\}} \ip{p}{\bar a_\ell - \bar a_j} = 
\dfrac{\|\bar A(x_k-z)\|_g}{\ip{\nabla f(u_k)}{u_k-u^*}}\cdot\dmax_{\ell\in I(x_k),j\in\{1,\dots,n\}} \ip{\nabla f (u_k)}{a_\ell - a_j}.
\]
On the other hand, the construction of $\bar \Phi_g(\bar A,x,z)$ yields
\[
\dmax_{\ell\in I(x_k),j\in\{1,\dots,n\}} \ip{p}{\bar a_\ell - \bar a_j} \ge \bar\Phi_g(\bar A,x_k,z)\ge \bar\Phi_g(\bar A).
\]
Putting the above two together we get
\[
\dmax_{\ell\in I(x_k),j\in\{1,\dots,n\}} \ip{\nabla f (u_k)}{a_\ell - a_j}
\ge\bar\Phi_g(\bar A) \cdot \dfrac{
\ip{\nabla f(u_k)}{u_k-u^*}}{\|\bar A(x_k-z)\|_g}.
\]
\qed
\end{proof}
}
\blue{
\section{Some examples}
\label{sec.examples}
Example~\ref{ex.phi.tight} and Example~\ref{ex.bar.phi.tight} below show that the convergence bounds for the Frank-Wolfe algorithm with away steps stated in Theorem~\ref{thm.fw.linear} and Theorem~\ref{thm.quad.conv} in terms of $\Phi$ and $\bar \Phi_g$ are tight modulo some constants. Example~\ref{ex.bar.phi.tight.again} shows that the bound on $\bar \Phi_g$ in terms of $\Phi$ in Proposition~\ref{prop.bar.phi} is also tight modulo a constant.  Finally, Example~\ref{ex.def.r} shows that the minimum in the expressions for $r$ in Theorem~\ref{thm.quad.conv} and Theorem~\ref{thm.non-strong} cannot be omitted.

We illustrate the bounds in Example~\ref{ex.phi.tight} and Example~\ref{ex.bar.phi.tight}  via computational experiments.  Our experiments were conducted via a verbatim implementation of Algorithm~\ref{alg:frankwolfe} in matlab for a convex quadratic objective with steplength computed as in~\eqref{eq.step.length}.
The matlab code is publicly available at the following website
{\tt http://www.andrew.cmu.edu/user/jfp/fwa.html}.  The reader can easily replicate the numerical results described below.

\begin{example}\label{ex.phi.tight} Let $\theta \in (0,\pi/6)$ and 
$
A := \matr{\cos(2\theta)  &1 & -1 \\ \sin(2\theta)&0 & 0}.
$
In this case $\diam(A) = 2$ and $\Phi(A) = \sin(\theta)$.
Consider the problem
\[
\min_{u\in \conv(A)} \frac{1}{2}\|u\|^2. 
\]
The optimal value of this problem is zero attained at $u^* = 0$. 
Furthermore, the condition number of the objective function is one.  If we apply Algorithm~\ref{alg:frankwolfe} to this problem starting with $u_0 = a_1 = Ae_1$, then it follows that for $k=1,2,\dots$ the algorithm alternates between regular steps toward $a_2$ and away steps from $a_1$.  Furthermore, it can be shown via a geometric reasoning that  for $k=1,2,\dots$
\[
\frac{1}{2}\|u_{k+1}\|^2 =  \frac{1}{2}\|u_k\|^2\cos^2(\theta_k)
\]
where $\theta_k \in(0,3\theta)$.  In particular,  for $k=1,2,\dots$
\[
\frac{\frac{1}{2}\|u_{k+1}\|^2 }{\frac{1}{2}\|u_{k}\|^2} = 1 - \sin^2(\theta_k) \ge 1-9\sin^2(\theta) = 1-\frac{36\Phi(A)^2}{\diam(A)^2}.
\]
Thus the rate of convergence \eqref{lin.conv} in Theorem~\ref{thm.fw.linear} is tight modulo a constant.

Figure~\ref{fig.example.strong} shows the ratio $1-\frac{\frac{1}{2}\|u_{k+1}\|^2 }{\frac{1}{2}\|u_{k}\|^2}$ and the bound $9\sin^2(\theta)$ based on numerical experiments for $\theta = \pi/10, \; \theta= \pi/100,$ and $\theta= \pi/1000$.   The figure confirms that the above ratio stays bounded away from zero, that is, the objective values $\frac{1}{2}\|u_{k}\|^2$ converge linearly to zero.  The figure also confirms that in each case the above ratio stays below and  pretty close to the bound $9\sin^2(\theta)$ and thus the rate of linear convergence of $\frac{1}{2}\|u_{k}\|^2$ to zero is slower than $1-9\sin^2(\theta)$.

\begin{figure}[h]
\centering
\includegraphics[width=.485\textwidth]{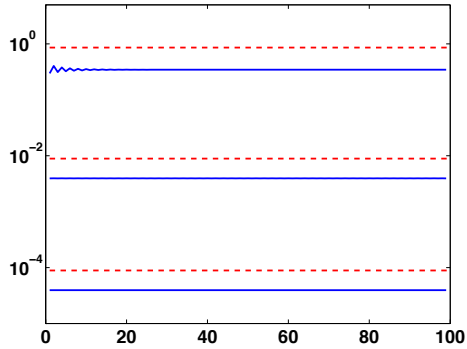}
\caption{\blue{Plot of the ratio $1-\frac{\frac{1}{2}\|u_{k+1}\|^2 }{\frac{1}{2}\|u_{k}\|^2}$ (solid line) and the bound $9\sin^2(\theta)$ (dash line) in Example~\ref{ex.phi.tight} for $\theta = \pi/10$ (top lines), $\theta= \pi/100$ (middle lines), and $\theta= \pi/1000$ (bottom lines).}}
\label{fig.example.strong}
\end{figure}

\end{example}

\begin{example}\label{ex.bar.phi.tight} 
Let $t > 0$ and $A:=\matr{t & t & -t  \\ t & 0 & 0}.$ Consider the problem
\[
\min_{u\in \conv(A)} \frac{1}{2}\ip{u}{Qu} + \ip{b}{u} 
\]
where $Q = \matr{1 & 0 \\ 0 & 0}$ and $b = \matr{0\\1}$.
The optimal value of this problem is zero attained at $u^* = \matr{0\\0}$.
Thus $\bar A = \matr{Q^{1/2}A \\ b\transp A} = \matr{t& t & -t  \\ 0 & 0 & 0 \\ t& 0 & 0}, \; Z^* = \{0.5e_2+0.5e_3\}$, and $g =\matr{0\\ 0}$.  Proposition~\ref{prop.bar.phi}(a) and some straightforward calculations show that $Z(g) = \conv\{e_2,e_3\}$, $\diam(Q^{1/2}A) = 2t,$ and   
$$\bar \Phi_g(\bar A) = \left\{\begin{array}{ll} 2t & \text{ if } \; t < 1/8 \\ 
\sqrt{t-1/16} & \text{ if } \; t \ge 1/8. 
\end{array} \right.$$
 If we apply Algorithm~\ref{alg:frankwolfe} to this problem starting with $u_0 = a_1 = Ae_1$, then it follows that for $k=1,2,\dots$ the algorithm alternates between regular steps toward $a_2$ and away steps from $a_1$.  Furthermore, for $t \gg 1$ it can be shown via a geometric reasoning that for $1\le k < t/4$ 
\[
\frac{\frac{1}{2}\ip{u_{k+1}}{Qu_{k+1}} + \ip{b}{u_{k+1}}}{\frac{1}{2}\ip{u_k}{Qu_k} + \ip{b}{u_k}} \ge 1 - \frac{4}{t}. 
\]
Observe that for $t \gg 1$ we have 
\[
\frac{\bar\Phi_g(\bar A)^2}{8\diam(Q^{1/2}A)^2} = \frac{t-1/16}{32t^2} \approx \frac{1}{32t}.
\]
Therefore, the rate of convergence in Theorem~\ref{thm.quad.conv} is tight modulo a  constant.  Notice that in sharp contrast to $\frac{\bar \Phi_g(\bar A)}{\diam(Q^{1/2}A)}$ 
which tends to zero as $t\rightarrow \infty$, all of $\frac{\Phi(A)}{\diam(A)}, \frac{\Phi(Q^{1/2}A)}{\diam(Q^{1/2}A)},$ and $\frac{\Phi(\bar A)}{\diam(Q^{1/2}A)}$ stay constant and bounded away from zero for all $t > 0$.  Thus the convergence rate in Theorem~\ref{thm.quad.conv} cannot be stated solely in terms of any of the latter three ratios.

Figure~\ref{fig.example.nostrong} shows the ratio $1-\frac{\frac{1}{2}\ip{u_{k+1}}{Qu_{k+1}} + \ip{b}{u_{k+1}}}{\frac{1}{2}\ip{u_k}{Qu_k} + \ip{b}{u_k}}$ and the bound $\frac{4}{t}$ 
based on numerical experiments for $t = 200, \; t= 20000,$ and $t= 2000000$.  Once again, the figure confirms that the objective values converge linearly to zero  and that the ratio stays below and pretty close to the bound $\frac{4}{t}$.  However, we should note that the latter only holds for $k$ up to a certain threshold.  Given the simplicity of this example, the optimal value is attained for $k$ sufficiently large.

\begin{figure}[h]
\centering
\includegraphics[width=.485\textwidth]{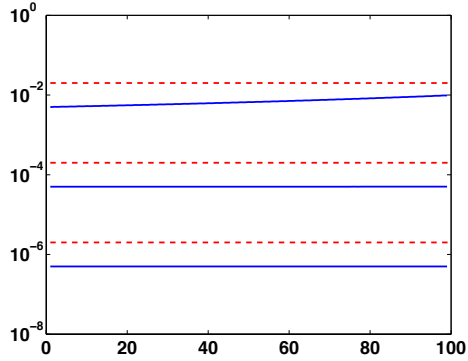}
\caption{\blue{Plot of the ratio $1-\frac{\frac{1}{2}\ip{u_{k+1}}{Qu_{k+1}} + \ip{b}{u_{k+1}}}{\frac{1}{2}\ip{u_k}{Qu_k} + \ip{b}{u_k}}$ (solid line) and the bound $\frac{4}{t}$ (dash line) in Example~\ref{ex.bar.phi.tight} for $t = 200$ (top lines), $t= 20000$ (middle lines), and $t= 2000000$ (bottom lines).}}
\label{fig.example.nostrong}
\end{figure}

\end{example}

\begin{example}\label{ex.bar.phi.tight.again} Let $t > 0$ and $\bar A := \matr{t & t & -t \\ t & 0 & 0}.$ 
For $g = 0$ proceeding as in Example~\ref{ex.bar.phi.tight} it follows that $Z(g) = \conv\{e_2,e_3\}, \delta(g) = t,$ and 
$$\bar \Phi_g(\bar A) = \left\{\begin{array}{ll} 2t & \text{ if } \; t < 1/8 \\ 
\sqrt{t-1/16} & \text{ if } \; t \ge 1/8. 
\end{array} \right.$$
In this case the matrix $\hat A$ in Proposition~\ref{prop.bar.phi}(b) is
\[
\hat A = \matr{1 & 0 \\ 0 & \frac{1}{\sqrt{t}}} \bar A = \matr{t & t & -t \\ \sqrt{t} & 0 & 0}.
\]
Proposition~\ref{prop.restricted.width} and some straightforward calculations show that 
\[
\Phi(\hat A,Z(g)) = \frac{2t}{\sqrt{4t+1}}. 
\]
This shows that the bound in Proposition~\ref{prop.bar.phi}(b) is tight modulo a constant.  Indeed, notice that in this example $\frac{\Phi(\hat A,Z(g))}{\bar\Phi_g(\bar A)} \rightarrow 1$ both when $t\rightarrow \infty$ and when $t\downarrow 0$.
\end{example}

\begin{example}\label{ex.def.r} 
Let $t > 0$ and $A:=\matr{0 & 1 \\ 0 & t}.$ Consider the problem
\[
\min_{u\in \conv(A)} \frac{1}{2}\ip{u}{Qu} + \ip{b}{u} 
\]
where $Q = \matr{1 & 0 \\ 0 & 0}$ and $b = \matr{0\\1}$.
The optimal value of this problem is zero attained at $u^* = \matr{0\\0}$.
Thus $\bar A = \matr{Q^{1/2}A \\ b\transp A} = \matr{0 & 1  \\ 0 & 0 \\ 0 & t}, \; Z^* = \{e_1\}$, and $g =\matr{0\\ 0}$.  Proposition~\ref{prop.bar.phi}(a) and some straightforward calculations show that $Z(g) = \conv\{e_1\}$, $\diam(Q^{1/2}A) = 1,$ and   $\bar \Phi_g(\bar A) = \sqrt{1+t}.$
Thus for $t \gg 1$ 
\[
\frac{\bar \Phi_{g}\left(\bar A\right)^2}{8\diam(Q^{1/2}A)^2} = \frac{1+t}{8} > \frac{1}{2}.
\]
\end{example}

}

\section*{Acknowledgements}

Javier Pe\~na's research has been supported by NSF grant CMMI-1534850.


\begin{thebibliography}{10}

\bibitem{AhipST08}
S.~Ahipasaoglu, P.~Sun, and M.~Todd.
\newblock Linear convergence of a modified {F}rank-{W}olfe algorithm for
  computing minimum-volume enclosing ellipsoids.
\newblock {\em Optimization Methods and Software}, 23(1):5--19, 2008.

\bibitem{BeckS15}
A.~Beck and S.~Shtern.
\newblock Linearly convergent away-step conditional gradient for non-strongly
  convex functions.
\newblock {\em Math. Program., To Appear.}

\bibitem{BeckT04}
A.~Beck and M.~Teboulle.
\newblock A conditional gradient method with linear rate of convergence for
  solving convex linear systems.
\newblock {\em Math. Meth. of Oper. Res.}, 59(2):235--247, 2004.

\bibitem{Bert99}
D.~Bertsekas.
\newblock {\em Nonlinear Programming}.
\newblock Athena Scientific, Belmont, MA, 1999.

\bibitem{EpelF00}
M.~Epelman and R.~M. Freund.
\newblock Condition number complexity of an elementary algorithm for computing
  a reliable solution of a conic linear system.
\newblock {\em Math. Program.}, 88(3):451--485, 2000.

\bibitem{FranW56}
M.~Frank and P.~Wolfe.
\newblock An algorithm for quadratic programming.
\newblock {\em Naval Research Quarterly}, 3:95--110, 1956.

\bibitem{GarbH13}
D.~Garber and E.~Hazan.
\newblock A linearly convergent conditional gradient algorithm with
  applications to online and stochastic optimization.
\newblock {\em SIAM J. on Optim.}, 26:1493--1528, 2016.

\bibitem{GuelM86}
J.~Gu\'elat and P.~Marcotte.
\newblock Some comments on {W}olfe's away step.
\newblock {\em Math. Program.}, 35:110--119, 1986.

\bibitem{Jagg13}
M.~Jaggi.
\newblock Revisiting {F}rank-{W}olfe: Projection-free sparse convex
  optimization.
\newblock In {\em ICML}, volume~28 of {\em JMLR Proceedings}, pages 427--435,
  2013.

\bibitem{KumaY11}
P.~Kumar and E.~A. Yildirim.
\newblock A linearly convergent linear-time first-order algorithm for support
  vector classification with a core set result.
\newblock {\em INFORMS Journal on Computing}, 23(3):377--391, 2011.

\bibitem{LacoJ15}
S.~Lacoste-Julien and M.~Jaggi.
\newblock On the global linear convergence of {F}rank-{W}olfe optimization
  variants.
\newblock In {\em Advances in Neural Information Processing Systems (NIPS)},
  2015.

\bibitem{NancFSA14}
R.~{\~N}anculef, E.~Frandi, C.~Sartori, and H.~Allende.
\newblock A novel {F}rank-{W}olfe algorithm. {A}nalysis and applications to
  large-scale {SVM} training.
\newblock {\em Inf. Sci}, 285:66--99, 2014.

\bibitem{Nest04}
Y.~Nesterov.
\newblock {\em Introductory Lectures on Convex Optimization: A Basic Course}.
\newblock Applied Optimization. Kluwer Academic Publishers, 2004.

\bibitem{PenaRS15}
J.~Pe{\~n}a, D.~Rodr\'iguez, and N.~Soheili.
\newblock On the von {N}eumann and {F}rank-{W}olfe algorithms with away steps.
\newblock {\em SIAM J. on Optim.}, 26:499--512, 2016.

\bibitem{Wolf70}
P.~Wolfe.
\newblock Convergence theory in nonlinear programming.
\newblock In {\em Integer and Nonlinear Programming}. North-Holland, Amsterdam,
  1970.

\end{thebibliography}

\end{document}